\def\<{\langle}
\def\>{\rangle}
\newtheorem{thm}{Theorem}[section]
\newtheorem{lem}[thm]{Lemma}
\newtheorem{prop}[thm]{Proposition}
\newtheorem{ex}[thm]{Example}
\theoremstyle{definition}
\newtheorem{defn}{Definition}[section]
\theoremstyle{remark}
\newtheorem{re}{Remark}[section]
\begin{document}
	\title{\bf Maurer-Cartan characterization, cohomology and deformation of compatible Lie Yamaguti algebras}
	\author{\bf Asif Sania, Basdouri Imed, Sadraoui Mohamed Amin}
	\author{{ Asif Sania $^{1}$
			\footnote { Corresponding author, E-mail: 11835037@zju.edu.cn}
			Basdouri Imed $^{2}$
			\footnote { Corresponding author, E-mail: basdourimed@yahoo.fr}
			, \ Sadraoui Mohamed Amin $^{3}$
			\footnote { Corresponding author, E-mail: aminsadrawi@gmail.com}
		}\\
		{\small 1. Nanjing University of Information science and technology, Nanjing, PR China. } \\
		{\small 2. University of Gafsa, Faculty of Sciences Gafsa, 2112 Gafsa, Tunisia } \\
		{\small 3. University of Sfax, Faculty of Sciences Sfax, BP
			1171, 3038 Sfax, Tunisia}\\
	}
	\date{}
	\maketitle
	\begin{abstract}
		This study aims to generalize the notion of compatible Lie algebras to the compatible Lie Yamaguti algebras. Along with describing the representation of the compatible Lie Yamaguti algebra in detail, we also introduce the Maurer-Cartan characterization and cohomology of Lie Yamaguti algebras. As a result of the obtained cohomology, we studied its deformation. We define Rota-Baxter operators on compatible Lie Yamaguti algebras as well as on compatible pre-Lie Yamaguti algebras. Using Rota-Baxter operators, we examine how compatible Lie (and compatible pre-Lie) Yamaguti algebras are related.
	\end{abstract}
	\textbf{Key words}:\, Lie Yamaguti algebra, pre-Lie Yamaguti algebra, Rota-Baxter operator, compatible Lie Yamaguti algebra.
	
	\textbf{Mathematics Subject Classification} 17B38, 17B40, 17B15, 17B56, 17B80, 17B10.
	
	\numberwithin{equation}{section}
	\tableofcontents
	\section{Introduction}\label{subsec11}
	In recent years, the concept of Lie Yamaguti algebra, which is a generalization of Lie algebra and Lie triple system (also denoted by L.t.s), has attracted the attention of numerous researchers to study and explore its advanced structural properties. Motivated by the study of Nomizu's in 1950, the structure of Lie Yamaguti algebra was first developed by K.Yamaguti in 1957, in the study of homogenous spaces with the canonical connection see \cite{N1, Y1}. Then K. Yamaguti continued this study in \cite{Y2, Y3}, which provided additional motivation to the other researchers and they started exploring this concept from different perspectives. This approach leads to the development of cohomology, deformation, and extension results of Lie Yamaguti algebra along with the structural development of pre-Lie Yamaguti algebra and Lie Yamaguti bialgebra, ( see \cite{B1, B2, B3, B4, L2 }for more details). In addition, the Lie Yamaguti algebra's enveloping algebra was examined in \cite{K1}, while Zhan and Li investigated the deformation and extension in \cite{Z1}. Note that a Lie Yamaguti algebra $(L, [\cdot, \cdot], [\cdot, \cdot, \cdot])$ with a trivial bilinear operation reduces to a Lie triple system, whereas a trivial trilinear operation reduces it to a Lie algebra. The Lie Yamaguti algebra \cite{Z1} also arises from a Lie algebra $(L, [\cdot, \cdot])$ endowed with a trilinear operation such that $[x, y, z]:= [[x, y], z]$ for $x, y, z \in L$ (for more explanation see example \eqref{ex2.1}).
 \par
	The concept of Rota-Baxter operators on associative algebra was first introduced by G. Baxter in his studies of fluctuation theory in probability \cite{B5}. In the study of the classical Yang-Baxter equation (C.Y.B.E), Kupershmidt proposed the idea of relative Rota-Baxter operators, also known as $\mathcal{O}$-operators or Kupershmidt operators, on Lie algebra. This broad concept has numerous significant implications for both physics and mathematical physics, as discussed in \cite{K2}. Let us briefly recall the definition of relative Rota-Baxter operators from \cite{L3}.\\
	Given the Lie algebra $(L, [\cdot, \cdot])$ and its representation $(V;\rho)$, a relative Rota-Baxter operator on $L$ with respect to $(V;\rho)$ is a linear map $R:V \rightarrow L$, that satisfies
	\begin{equation} \label{01}
		[Rx, Ry]=R\big(\rho(Rx)y-\rho(Ry)x \big) \quad \forall~x, y\in L. \end{equation}
	Thus, a relative Rota-Baxter operator is linked to an arbitrary representation. In contrast, Rota-Baxter operators (of weight $\lambda=0$) are related to the adjoint representation, which is what we are interested in this paper.
	Consequently, the equation \eqref{01} in this instance becomes
	\begin{equation*}
		[Rx, Ry]=R\big([Rx, y]-[Ry, x] \big).
	\end{equation*}
	For additional information on Rota-Baxter algebras, including their various applications, such as renormalization in perturbative quantum field theory, splitting of operads, and connections to noncommutative symmetric functions and Hopf algebras see \cite{B6, B7, B8} and references therein.
	Rota-Baxter operator, in particular, performs the transformation from Lie algebra $(L, [\cdot, \cdot])$ to pre-Lie algebra $(L, \star)$ using the formula
	\begin{equation*}
		x \star y= [Rx, y].
	\end{equation*}
	Since Lie Yamaguti algebra is a generalization of a Lie algebra and L.t.s, Rota-Baxter operators also exhibit this property (for more information on Rota-Baxter operators in the context of L.t.s, see \cite{C1}). Let us recall the notion of relative Rota-Baxter operator for Lie Yamaguti algebras from \cite{Z2, S1}.
	Given a Lie Yamaguti algebra $(L, [\cdot, \cdot], [\cdot, \cdot, \cdot])$ and its representation $(V;\rho, \mu)$, a relative Rota-Baxter operator on $L$ with respect to $(V;\rho, \mu)$ is a linear map $R:V \rightarrow L$, that satisfies the following identities 
	\begin{align*}
		[Rx, Ry]&=R\big(\rho(Rx)y-\rho(Ry)x, \big) \\
		[Rx, Ry, Rz]&=R\big(D_{\rho, \mu}(Rx, Ry)z + \mu(Ry, Rz)x - \mu(Rx, Rz)y. \big)
	\end{align*}
	Where $D_{\rho, \mu}$ is explained in the Definition \eqref{defn2.3}. One of the utilities of the Rota-Baxter operator is that it leads Lie Yamaguti algebra to pre-Lie Yamaguti algebra as mentioned in the next diagram:\begin{center} \label{diag1}
		$\xymatrix{\textbf{\emph{Lie Yamaguti algebras}}&\underrightarrow{ R.B.O} &\textbf{\emph{pre-Lie Yamaguti algebras.}}}$
	\end{center}
	\par A pair of Lie algebras $(L, [\cdot, \cdot]_1)$ and $(L,[\cdot,\cdot]_2)$ that are compatible with one another still contain Lie brackets in any linear combination in which they appear. If the compatibility requirement is met between two Lie algebras, then the new algebra formed is called a compatible Lie algebra $(L, [\cdot, \cdot]_1, [\cdot, \cdot]_2)$, studied in \cite{G2, G3, O1, L1}. The notion of compatible Lie algebras appears in many fields of mathematics and mathematical physics such as the study of the classical Yang-Baxter equations, principal chiral field, loop algebras, etc. Due to the significant importance of compatible algebras structure in Cartan matrices of affine Dynkin diagrams, integrable matrix equations, Hamiltonian dynamical systems, infinitesimal bialgebras, and quiver representations, we are motivated to study the compatible Lie Yamaguti algebras and their relations with the famous Rota-Baxter operator. In this paper, we generalize the concept of compatible Lie algebra to the case of compatible Lie Yamaguti algebras and show its relation with the Rota-Baxter operators and pre-Lie Yamaguti algebras. We discuss their representation theory, which provides a foundation to evaluate the cohomology and deformation theory of compatible Lie Yamaguti algebra.
	\par This paper is organized as follows:
	Section \eqref{sec2} briefly recalls some basic definitions and results that we need in the rest of the paper. In Section \eqref{sec3}, we propose the idea of compatible Lie Yamaguti algebras and their representation. In addition to defining derivation for the compatible Lie algebra and providing an appropriate example, we explicitly explain the representation of the semi-direct product of the compatible Lie Yamaguti algebra. In Section \eqref{section cohomolgy} we introduce the cohomolgy of compatible Lie Yamaguti algebras using the Maurer-Cartan characterization. In Section \eqref{section deformation} we study the deformation of compatible Lie Yamaguti algebras and we show the difference between our study and the case of compatible Lie algebras.  Finally, in Section \eqref{sec4}, we define the Rota-Baxter operators on compatible Lie Yamaguti algebras and explore the compatible pre-Lie Yamaguti algebras. We further investigate the relationship between compatible Lie (and pre-Lie) compatible Lie Yamaguti algebras via Rota-Baxter operators.\\Throughout this paper, we consider all the vector spaces and tensor products over the field $\mathbb{K}$ of characteristic $0$ and Rota-Baxter operator of weight $\lambda =0$.
	\section{Preliminary} \label{sec2} This section is devoted to defining some important definitions and notions, that are useful for further understanding of this paper. However, this section is divided into two subsections, In the first subsection, we introduce Lie  Yamaguti algebra and its representations. In the later subsection, we recall the notion of the Rota-Baxter operator on the Lie Yamaguti algebras and provide some important results.
	\subsection*{Lie Yamaguti algebras:} Some basic results on Lie Yamaguti algebras, described in this subsection are as follows
	\begin{defn}
		A Lie Yamaguti algebra is a vector space $L$ equipped with both bilinear and trilinear brackets respectively 
		\begin{center}
			$[\cdot, \cdot]: \wedge^2L \rightarrow L$ and $[\cdot, \cdot, \cdot]: \wedge^2L \otimes L \rightarrow L$
		\end{center}
		that satisfies the following equations, for all $x, y, z, w, t \in L$
		\begin{enumerate}
			\item[(LY1):] $[[x, y], z]+[[y, z], x]+[[z, x], y]+[x, y, z]+[y, z, x]+[z, x, y]=0, $
			\item[(LY2):] $[[x, y], z, w]+[[y, z], x, w]+[[z, x], y, w]=0, $
			\item[(LY3):] $[x, y, [z, w]]=[[x, y, z], w]+[z, [x, y, w]], $
			\item[(LY4):]$ [x, y, [z, w, t]]=[[x, y, z], w, t]+[z, [x, y, w], t]+[z, w, [x, y, t]].$
		\end{enumerate}
	\end{defn}
	\begin{ex} \label{ex2.1}
		Let $(L, [\cdot, \cdot])$ be a Lie algebra. We define $[\cdot, \cdot, \cdot]: \wedge^2L \otimes L \rightarrow L$ by 
		\begin{align*}
		  [x, y, z]:=[[x, y], z], \quad\forall~ x, y, z \in L.  
		\end{align*}
		Then $(L, [\cdot, \cdot], [\cdot, \cdot, \cdot])$ naturally yields a Lie Yamaguti algebra.
	\end{ex}
	\begin{defn}
		Let $(L, [\cdot, \cdot]_L, [\cdot, \cdot, \cdot]_L)$ and $(H, [\cdot, \cdot]_H, [\cdot, \cdot, \cdot]_H)$ two Lie Yamaguti algebras. An homomorphism from $(L, [\cdot, \cdot]_L, [\cdot, \cdot, \cdot]_L)$ to $(H, [\cdot, \cdot]_H, [\cdot, \cdot, \cdot]_H)$ is a linear map $\varphi :L \rightarrow H$, such that for all $x, y, z \in L$ 
		\begin{align*}
			\varphi([x, y]_L)&=[\varphi(x), \varphi(y)]_H, \\
			\varphi([x, y, z]_L)&=[\varphi(x), \varphi(y), \varphi(z)]_H.
		\end{align*}
	\end{defn}
	Now we recall the definition of representation of a Lie Yamaguti algebra.
	\begin{defn} \label{defn2.3}
		Let $(L, [\cdot, \cdot], [\cdot, \cdot, \cdot])$ be a Lie Yamaguti algebra and $V$ a vector space. A representation of $L$ on $V$ consists of both a linear and a bilinear maps respectively $\rho : L \rightarrow gl(V)$ and $\mu : \otimes^2L \rightarrow gl(V)$ such that for all $x, y, z, w \in L$, we have
		\begin{enumerate}
			\item $\mu([x, y], z)-\mu(x, z)\rho(y)+\mu(y, z)\rho(x)=0, $
			\item $\mu(x, [y, z])-\rho(y)\mu(x, z)+\rho(z)\mu(x, y)=0, $
			\item $\rho([x, y, z])=[D_{\rho, \mu}(x, y), \rho(z)], $
			\item $\mu(z, w)\mu(x, y)-\mu(y, w)\mu(x, z)-\mu(x[y, z, w])+D_{\rho, \mu}(y, z)\mu(x, w)=0, $
			\item $\mu([x, y, z], w)+\mu(z, [x, y, w])=[D_{\rho, \mu}(x, y), \mu(z, w)], $
		\end{enumerate}
		where the bilinear map $D_{\rho, \mu}:\otimes^2L \rightarrow gl(V)$ is given by 
		\begin{equation*}
			D_{\rho, \mu}(x, y):=\mu(y, x)-\mu(x, y)+[\rho(x), \rho(y)]-\rho([x, y]) \quad \forall x, y \in L.\end{equation*}
		We denote  the representation $L$ on $V$ by $(V;\rho, \mu)$.
	\end{defn}
	\begin{re}
		Let $(L, [\cdot, \cdot], [\cdot, \cdot, \cdot])$ be a Lie Yamaguti algebra and $(V;\rho, \mu)$ be its representation. The Lie Yamaguti algebra reduces to a L.t.s $(L, [\cdot, \cdot, \cdot])$ if $\rho=0$. In this case $(V;\mu)$ is a representation of L.t.s $(L, [\cdot, \cdot, \cdot])$. Also, the Lie Yamaguti algebra $L$ reduces to a Lie algebra $(L, [\cdot, \cdot])$ if $\mu=0$. In this case $D_{\rho, \mu}=0$, which means $(V;\rho)$ becomes a representation of $(L, [\cdot, \cdot])$. As we mentioned earlier the Lie Yamaguti algebra is a generalization of both Lie algebra and L.t.s, the previous definition confirms this generalization.
	\end{re}
	Many authors characterized the representation of a Lie Yamaguti algebra by the semi-direct product of  Lie Yamaguti algebra with its representations, let's recall it in the next proposition.
	\begin{prop}
		Let $(L, [\cdot, \cdot], [\cdot, \cdot, \cdot])$ be a Lie Yamaguti algebra and $V$ a vector space. Let $\rho : L \rightarrow gl(V)$ and $\mu : \otimes^2L \rightarrow gl(V)$ are linear maps. Then $(V;\rho, \mu)$ is a representation of $(L, [\cdot, \cdot], [\cdot, \cdot, \cdot])$ if and only if there is a Lie Yamaguti algebra structure on the direct sum $L \oplus V$, which is defined by 
		\begin{align*}
			[x+u, y+v]_{\ltimes}&=[x, y]+\rho(x)v-\rho(y)u, \\
			[x+u, y+v, z+w]_{\ltimes}&=[x, y, z]+ D_{\rho, \mu}(x, y)w+\mu(y, z)u-\mu(x, z)v.
		\end{align*}for all $x, y, z \in L$; $u, v, w \in V$.
		Such a Lie Yamaguti algebra $(L \oplus V, [\cdot, \cdot]_{\ltimes}, [\cdot, \cdot, \cdot]_{\ltimes})$ is called the semidirect product Lie Yamaguti algebra. 
	\end{prop}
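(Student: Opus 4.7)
The plan is to verify axioms (LY1)--(LY4) for the brackets $[\cdot,\cdot]_{\ltimes}$ and $[\cdot,\cdot,\cdot]_{\ltimes}$ on $L\oplus V$ by decomposing each identity into its $L$-component and its $V$-component. The $L$-components immediately reduce to the corresponding (LY1)--(LY4) for $L$ itself, which hold by hypothesis, so every axiom is a statement purely about $V$. The five representation axioms of Definition~\ref{defn2.3} will be recovered exactly from these $V$-components, and the equivalence will then follow by reading the same calculation in either direction.

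As a preliminary step, I would verify that the proposed operations actually factor through $\wedge^2(L\oplus V)$ and $\wedge^2(L\oplus V)\otimes(L\oplus V)$. Antisymmetry of $[\cdot,\cdot]_{\ltimes}$ is immediate from the formula. For the triple bracket, skew-symmetry in the first two arguments follows from the identity $D_{\rho,\mu}(x,y)=-D_{\rho,\mu}(y,x)$, which itself is a direct consequence of the defining formula for $D_{\rho,\mu}$.

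Next, by multilinearity it is enough to test each axiom on arguments that lie entirely in $L$ or entirely in $V$. The key simplification is that $[u,v]_{\ltimes}=0$ for $u,v\in V$, and $[\cdot,\cdot,\cdot]_{\ltimes}$ vanishes whenever at least two of its three arguments lie in $V$. Consequently one needs to consider only configurations with at most two $V$-arguments in (LY1)--(LY2) and at most one $V$-argument in (LY3), together with a small number of cases for (LY4). For each such configuration I would expand both sides using the explicit formulas and match $V$-components:
\begin{itemize}
\item (LY1) with one $V$-argument is automatically satisfied once the definition of $D_{\rho,\mu}$ is written out, since the $\mu$-terms from the two cyclically-permuted triple brackets combine with the bilinear bracket contributions to exactly the expression $D_{\rho,\mu}(x,y)-\mu(y,x)+\mu(x,y)=[\rho(x),\rho(y)]-\rho([x,y])$.
\item (LY2) with one $V$-argument yields axiom~(1) of Definition~\ref{defn2.3}.
\item (LY3) with a $V$-argument in the third slot yields axiom~(3), while with a $V$-argument in the fourth slot (inside the inner bracket) it yields axiom~(2).
\item (LY4) with a single $V$-argument yields axiom~(4) or axiom~(5) depending on which of the five slots is occupied by $V$; the remaining placements are either trivial (two or more $V$-slots) or reduce by skew-symmetry to an already-handled case.
\end{itemize}

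For the forward implication, these calculations directly show that the semi-direct brackets satisfy (LY1)--(LY4). For the converse, one specializes the (LY)-identities on $L\oplus V$ to the configurations listed above and reads off each representation axiom from the $V$-component. The main obstacle is purely bookkeeping, concentrated in (LY4): there are in principle $2^5$ placements of $L$- versus $V$-arguments, but the vanishing rules for the triple bracket collapse this to a handful of cases, each of which reduces by straightforward expansion to a single representation identity. No essential difficulty arises beyond organizing these cases carefully and tracking the signs coming from $\mu(y,z)u-\mu(x,z)v$ in the triple bracket and from $\rho(x)v-\rho(y)u$ in the binary bracket.
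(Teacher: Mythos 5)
The paper does not actually prove this proposition: it is stated as a recalled result (the semidirect product characterization from the Lie Yamaguti literature, cf.\ the reference to Proposition 2.10 of the LieYRep paper later in the text), so there is no in-paper argument to compare against. Your component-wise verification is the standard proof of this fact, and the overall strategy --- reduce the $L$-components to (LY1)--(LY4) for $L$, observe that the brackets vanish on two or more $V$-arguments, and match the surviving single-$V$ configurations with the five representation axioms --- is correct and would work in both directions.

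Two bookkeeping points in your case analysis need repair before the sketch becomes a proof. First, the slot assignment for (LY3) is off: placing the $V$-argument in either the third or the fourth slot of $[x,y,[z,w]]$ yields axiom (3), while axiom (2) comes from placing it in the first or second slot (inside the $\mu$-position of the triple bracket), not ``inside the inner bracket.'' Second, and more substantively, it is not true that every single-$V$ placement of (LY2) and (LY4) yields one of the five axioms: the last slot of (LY2) produces $D([x,y],z)v+D([y,z],x)v+D([z,x],y)v=0$, and the last slot of (LY4) produces $[D(x,y),D(z,w)]=D([x,y,z],w)+D(z,[x,y,w])$. Neither is an axiom of Definition 2.3; both are derived identities for $D_{\rho,\mu}$ that follow from axioms (1)--(2) and (3), (5) respectively via the defining formula for $D$. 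For the forward implication you must therefore prove these two auxiliary identities as lemmas; for the converse they are simply not needed, since the remaining placements already recover all five axioms. With those additions the argument is complete.
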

	\subsection*{Rota-Baxter operators on Lie Yamaguti algebras}
	In this subsection, we recall some results of Rota-Baxter operators on Lie Yamaguti algebras. For more details see \cite{Z2}, where this concept is elaborated to the general case.
	\begin{defn}
		Let $(L, [\cdot, \cdot], [\cdot, \cdot, \cdot])$ be a Lie Yamaguti algebra. A linear map $R: L \rightarrow L$ is called Rota-Baxter operator on $L$, if $R$ satisfies 
		\begin{align*}
			[Rx, Ry]&=R \big([Rx, y]-[Ry, x] \big), \\
			[Rx, Ry, Rz]&=R \big( [Rx, Ry, z]+[Ry, Rz, x]-[Rx, Rz, y]), 
		\end{align*}
		for all $x, y, z \in L$.
	\end{defn}
	Let's recall the definition of pre-Lie Yamaguti algebras \cite{S1}.
	\begin{defn}
		A pre-Lie Yamaguti algebra $(L, \star, \{\cdot, \cdot, \cdot\})$ is a vector space $L$ equipped with a bilinear operation $\star : \otimes^2 L \rightarrow L$ and a trilinear operation $\{\cdot, \cdot, \cdot\}: \otimes^3 L \rightarrow L$ such that for all $x, y, z, w, t \in L$, the following identities holds 
		\begin{enumerate} \item $\{z, [x, y]_C, w\}-\{y \star z, x, w\}+\{x \star z, y, w\}=0, $
			\item $\{x, y, [z, w]_C\}=z \star \{x, y, w\}-w \star \{x, y, z\}, $ 
			\item $\{\{x, y, z\}, w, t\}- \{\{x, y, w\}, z, t\}- \{x, y, \{z, w, t\}_D\}- \{x, y, \{z, w, t\}\} +\{x, y, \{w, z, t\}\}+\{z, w, \{x, y, t\}\}_D= 0, $ 
			\item $\{z, \{x, y, w\}_D, t\}+\{z, \{x, y, w\}, t\}-\{z, \{y, x, w\}, t\}+\{z, w, \{x, y, t\}_D\} 
			+\{z, w, \{x, y, t\}\}-\{z, w, \{y, x, t\}\}\\=\{x, y, \{z, w, t\}\}_D-\{\{x, y, z\}_D, w, t\}, $ 
			\item $\{x, y, z\}_D \star w+\{x, y, z\} \star w -\{y, x, z\} \star w=\{x, y, z\star w\}_D-z \star \{x, y, w\}_D.$
		\end{enumerate}With $[\cdot, \cdot]_C:\wedge^2L \rightarrow L$ is the commutator and $\{\cdot, \cdot, \cdot\}_D:\wedge^2L\otimes L \rightarrow L$ are respectively defined as follows
		\begin{align*}
			[x, y]_C&:=x \star y -y \star x, \\
			\{x, y, z\}_D&:=\{z, y, x\}-\{z, x, y\}+(y, x, z)-(x, y, z), 
		\end{align*}for $x, y, z \in L$. Here $(\cdot, \cdot, \cdot)$ means the associator which is defined by 
		\begin{center}
			$(x, y, z):=(x \star y)\star z -x \star (y \star z)$.
		\end{center}
	\end{defn}
	The diagram \eqref{diag1} holds by the next theorem 
	\begin{thm}(Theorem $(3.13)$ in \cite{S1}) Let $R: L \rightarrow L$ be a Rota-Baxter operator on a Lie Yamaguti algebra $(L, [\cdot, \cdot], [\cdot, \cdot, \cdot])$. Define two linear maps $\star:\otimes^2 L \to L $ and $ \{\cdot, \cdot, \cdot\}: \otimes^3 L \to L$ by
		\begin{align*}x\star y=[Rx, y] \text{ and } \{x, y, z\}=[Ry, Rz, x].
		\end{align*}
		Then $(L, \star, \{\cdot, \cdot, \cdot\})$ is a pre-Lie Yamaguti algebra.
	\end{thm}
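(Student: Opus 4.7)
The plan is to verify, one at a time, the five defining axioms of a pre-Lie Yamaguti algebra by substituting $x \star y = [Rx, y]$ and $\{x, y, z\} = [Ry, Rz, x]$, and then reducing each resulting identity to an instance of (LY1)--(LY4) with the help of the two Rota-Baxter relations.

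Before attacking the axioms, I would record two ``intertwining'' identities that the Rota-Baxter conditions encode. The first is immediate from the definition of the commutator $[\cdot,\cdot]_C$:
\begin{equation*}
R([x, y]_C) \;=\; R([Rx, y] - [Ry, x]) \;=\; [Rx, Ry].
\end{equation*}
The trilinear analogue is more delicate. Starting from $\{x, y, z\}_D = \{z, y, x\} - \{z, x, y\} + (y, x, z) - (x, y, z)$ and expanding each associator via $(x, y, z) = [R[Rx, y], z] - [Rx, [Ry, z]]$, I expect the first Rota-Baxter relation together with (LY1) to collapse $\{x, y, z\}_D$ into a sum of trilinear Lie Yamaguti brackets on $R$-images. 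The second Rota-Baxter relation should then produce a clean closed form for $R(\{x, y, z\}_D)$. These two intertwinings are what transport each pre-Lie Yamaguti identity back to the axioms of the ambient Lie Yamaguti structure.

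With the intertwinings in hand, axioms (1) and (2) involve only $\star$ and $\{\cdot, \cdot, \cdot\}$, so after substitution the left-hand side of (1) reads $[[Rx, Ry], Rw, z] - [Rx, Rw, [Ry, z]] + [Ry, Rw, [Rx, z]]$, which is exactly (LY2) applied to $(Rx, Ry, Rw, z)$; axiom (2) reduces in the same way to (LY3). Axioms (3)--(5) are the substantive cases because they involve $\{\cdot, \cdot, \cdot\}_D$. For these I would apply the trilinear intertwining at each occurrence of $\{\cdot,\cdot,\cdot\}_D$, invoke (LY4) on the resulting quadruple of $R$-images (this is the ``inner derivation'' identity that governs how $D_{\rho,\mu}$ acts on the trilinear bracket), and then cancel the remaining terms using (LY1) and the first Rota-Baxter relation.

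The main obstacle is algebraic bookkeeping: once both derived operations $[\cdot,\cdot]_C$ and $\{\cdot,\cdot,\cdot\}_D$ are unfolded, axioms (3) and (4) each split into more than a dozen trilinear brackets, and the cancellation requires invoking (LY4) with the arguments in precisely matched positions. One convenient shortcut, which I would consider if the direct expansion becomes unwieldy, is to work inside the semidirect product $L \ltimes L$ associated with the adjoint representation, observe that $R$ is a relative Rota-Baxter (i.e.\ $\mathcal O$-)operator there, and identify the claimed pre-Lie Yamaguti structure with the one induced on the graph of $R$; this converts the five identities into a single statement about the sub-adjacent Lie Yamaguti algebra together with the compatibility of $R$ with its representation, at the cost of first developing that ``sub-adjacent'' machinery.
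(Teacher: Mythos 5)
First, a point of comparison: the paper does not prove this statement at all — it is recalled verbatim from \cite{S1} (Theorem 3.13 there) — so there is no internal proof to measure your attempt against. Judged on its own terms, your plan has the right architecture: verify the five axioms, transport them back to (LY1)--(LY4) via the two Rota--Baxter identities, and in particular the binary intertwining $R([x,y]_C)=[Rx,Ry]$ is correct and is indeed the workhorse for the easy axioms. Your closing remark, that one could instead realize the claimed structure on the graph of $R$ inside the semidirect product of $L$ with its adjoint representation, is in fact the route taken in \cite{S1} (which proves the general relative Rota--Baxter version, the present statement being the specialization to the adjoint representation); what you present as a fallback is really the main road.

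That said, two concrete steps of the direct verification do not go through as written. First, with the convention of the statement, $\{a,b,c\}=[Rb,Rc,a]$, your expansion of axiom (1) is correct, namely $[[Rx,Ry],Rw,z]-[Rx,Rw,[Ry,z]]+[Ry,Rw,[Rx,z]]$, but this is \emph{not} an instance of (LY2): in (LY2) every binary bracket occupies the first slot of the ternary bracket, whereas here two of them sit in the third slot, and the only available symmetry (skewness of $[\cdot,\cdot,\cdot]$ in its first two arguments) cannot move them there. The reduction becomes a literal instance of (LY2) (resp.\ of (LY3) for axiom (2)) only under the convention $\{x,y,z\}=[x,Ry,Rz]$ — which is the convention the paper itself adopts later in Section \ref{sec4} — so you must either fix the convention or supply the additional applications of (LY3) and skew-symmetry that the stated convention forces. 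Second, the ``trilinear intertwining'' is asserted rather than derived, and it is precisely the crux of axioms (3)--(5). Carrying out your own recipe, (LY1) together with the binary Rota--Baxter identity yields $\{x,y,z\}_D=-[Rx,Ry,z]+[Ry,z,Rx]+[z,Rx,Ry]$, which does not collapse to a single bracket; moreover the second Rota--Baxter identity governs the combination $[Rx,Ry,z]+[Ry,Rz,x]-[Rx,Rz,y]$, whose argument pattern (the un-$R$'d entry in the third slot) does not match the terms just obtained, so the hoped-for closed form for $R(\{x,y,z\}_D)$ does not simply drop out. Until that identity is actually established, axioms (3)--(5) — more than half of the theorem — remain unverified, and the honest completion of your argument is the semidirect-product construction you deferred.
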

	\section{ Compatible Lie Yamaguti algebras} \label{sec3}
	In this section, we introduce the notion of compatible Lie Yamaguti algebra. Based on the compatible Lie algebras in \cite{L1}, we generalize it to the case of Lie Yamaguti algebra. Furthermore, we introduce the main result of the paper. We start with the following definition.
	\begin{defn}
		\label{defn3.1}
		A compatible Lie Yamaguti algebra is a $5$-tuple $(L, [\cdot, \cdot]_1, [\cdot, \cdot, \cdot]_1, [\cdot, \cdot]_2, [\cdot, \cdot, \cdot]_2)$, where $(L, [\cdot, \cdot]_1, [\cdot, \cdot, \cdot]_1)$ and $(L, [\cdot, \cdot]_2, [\cdot, \cdot, \cdot]_2)$ are two Lie Yamaguti algebras such that 
		\begin{enumerate}
			\item[(\textbf{CY1}):] $[[x, y]_1, z]_2+c.p.+[[x, y]_2, z]_1+c.p.=0, $
			\item[(\textbf{CY2}):] $ [[x, y]_1, z, t]_2+[[y, z]_1, x, t]_2+[[z, x]_1, y, t]_2+[[x, y]_2, z, t]_1+[[y, z]_2, x, t]_1+[[z, x]_2, y, t]_1=0, $
			\item[(\textbf{CY3}):] $ [a, b, [x, y, z]_1]_2+[a, b, [x, y, z]_2]_1=[[a, b, x]_1, y, z]_2+[x, [a, b, y]_1, z]_2\\+[x, y, [a, b, z]_1]_2+ [[a, b, x]_2, y, z]_1+[x, [a, b, y]_2, z]_1+[x, y, [a, b, z]_2]_1, $
			\item[(\textbf{CY4}):] $[a, b, [x, y]_1]_2+[a, b, [x, y]_2]_1=[[a, b, x]_1, y]_2+[x, [a, b, y]_1]_2+[[a, b, x]_2, y]_1+[x, [a, b, y]_2]_1.$
		\end{enumerate}
		Where $c.p.$ means cyclic permutations concerning $x, y \text{ and } z$. For simplicity, we abbreviate the compatible Lie Yamaguti algebra by \textbf{C.L.Y.A}.
	\end{defn}
	\begin{ex}
		Consider a $2$-dimensional Lie Yamaguti algebra $L$ with the basis $\{e_1,e_2\}$, where brackets are defined by:
		$[e_1,e_2]_1=e_1,[e_1,e_2,e_2]_1=e_1,[e_1,e_2]_2=e_2 \text{ and } [e_1,e_2,e_2]_2=e_2$. In this case $(L,[\cdot,\cdot]_1,[\cdot,\cdot,\cdot]_1,[\cdot,\cdot]_2,[\cdot,\cdot,\cdot]_2)$ is a \textbf{C.L.Y.A}.
	\end{ex}
	\begin{ex}
		Let $(L,[\cdot,\cdot]_1,[\cdot,\cdot]_2)$ be a compatible Lie algebras. Define 
		\begin{equation*}
			[\cdot,\cdot,\cdot]_i:\wedge^2L \otimes L \rightarrow L \text{ by } [x,y,z]_i=[[x,y]_j,z]_i, \text{ for } x,y,z \in L \text{ and } i, j\in \{1,2\},i\neq j .
		\end{equation*}
		Then $(L,[\cdot,\cdot]_1,[\cdot,\cdot,\cdot]_1,[\cdot,\cdot]_2,[\cdot,\cdot,\cdot]_2)$ is a \textbf{C.L.Y.A}.
	\end{ex}
	\begin{prop}
		\label{prop3.1}
		A $5$-tuple $(L, [\cdot, \cdot]_1, [\cdot, \cdot, \cdot]_1, [\cdot, \cdot]_2, [\cdot, \cdot, \cdot]_2)$ is a compatible Lie Yamaguti algebras if and only if $(L,[\cdot, \cdot]_1, [\cdot, \cdot, \cdot]_1)$ and $(L,[\cdot, \cdot]_2, [\cdot, \cdot, \cdot]_2)$ are Lie Yamaguti structures on $L$ such that for any $k_1, k_2 \in \mathbb{K}$, the new bilinear and trilinear operation 
		\begin{align}
			\lceil \cdot, \cdot \rceil&=k_1 [\cdot, \cdot]_1+k_2 [\cdot, \cdot]_2 \label{eq1.1} \\
			\lceil \cdot, \cdot, \cdot \rceil&=k_1 [\cdot, \cdot, \cdot]_1+k_2 [\cdot, \cdot, \cdot]_2 \label{eq1.2}
		\end{align}
		define the Lie Yamaguti structure on $L$.
	\end{prop}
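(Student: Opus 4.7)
The decisive observation is that every Lie Yamaguti axiom (LY1)--(LY4) is a sum of terms in which exactly two bracket operations (bilinear or trilinear) are composed. Consequently, substituting $\lceil\cdot,\cdot\rceil=k_1[\cdot,\cdot]_1+k_2[\cdot,\cdot]_2$ and $\lceil\cdot,\cdot,\cdot\rceil=k_1[\cdot,\cdot,\cdot]_1+k_2[\cdot,\cdot,\cdot]_2$ into any (LY$i$) and expanding by multilinearity produces a homogeneous quadratic polynomial in $k_1,k_2$, whose only monomials are $k_1^2$, $k_1k_2$, and $k_2^2$. I would identify, axiom by axiom, the three coefficient blocks: the $k_1^2$ block is precisely (LY$i$) for $(L,[\cdot,\cdot]_1,[\cdot,\cdot,\cdot]_1)$, the $k_2^2$ block is the same axiom for $(L,[\cdot,\cdot]_2,[\cdot,\cdot,\cdot]_2)$, and the mixed $k_1k_2$ block is exactly the compatibility identity matching the axiom, namely (CY1) for (LY1), (CY2) for (LY2), (CY4) for (LY3), and (CY3) for (LY4) (after the obvious relabelling of free variables).

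For the ($\Leftarrow$) direction I would assume (CY1)--(CY4) together with the hypothesis that each $(L,[\cdot,\cdot]_i,[\cdot,\cdot,\cdot]_i)$ is Lie Yamaguti, so that every coefficient in every expanded axiom vanishes and $(L,\lceil\cdot,\cdot\rceil,\lceil\cdot,\cdot,\cdot\rceil)$ satisfies (LY1)--(LY4) for arbitrary $k_1,k_2\in\mathbb{K}$. For the ($\Rightarrow$) direction I would specialise to $(k_1,k_2)=(1,0)$ and $(0,1)$ to recover the separate Lie Yamaguti structures (already given), and then appeal to linear independence of $\{k_1^2,k_1k_2,k_2^2\}$ as polynomial functions on $\mathbb{K}^2$ (valid because $\operatorname{char}\mathbb{K}=0$) to extract vanishing of the cross-term block, yielding (CY1)--(CY4). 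An equivalent elementary route is to evaluate at $(1,1)$ and subtract the two pure cases to isolate the $k_1k_2$ block.

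The only obstacle is bookkeeping: the expansions of (LY2) and (LY4) involve cyclic sums and several composed trilinear terms, and one must verify with care that, after collecting like monomials, the $k_1k_2$ block rearranges into (CY2) and (CY3) exactly as written in Definition \ref{defn3.1}. In the write-up I would present the illustrative case (LY3)$\Leftrightarrow$(CY4) in full, since its single quadratic cross-term makes the mechanism transparent, and then state that (LY1)--(CY1), (LY2)--(CY2), and (LY4)--(CY3) are handled by identical bookkeeping.
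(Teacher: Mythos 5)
Your plan rests on the opening claim that every axiom (LY1)--(LY4) is a sum of terms in which exactly two bracket operations are composed, so that substituting $\lceil\cdot,\cdot\rceil$ and $\lceil\cdot,\cdot,\cdot\rceil$ yields a homogeneous quadratic polynomial in $k_1,k_2$. That is true for (LY2), (LY3), (LY4), but false for (LY1): the terms $[x,y,z]+[y,z,x]+[z,x,y]$ are single applications of the trilinear bracket, so the expansion of (LY1) for the combined operations has the form
\begin{align*}
k_1^2A_1+k_2^2A_2+k_1k_2M+k_1B_1+k_2B_2,
\end{align*}
where $A_i=[[x,y]_i,z]_i+c.p.$, $B_i=[x,y,z]_i+c.p.$, and $M$ is the mixed cyclic sum appearing in (CY1). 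In your $(\Leftarrow)$ direction the hypotheses give only $A_i+B_i=0$ (that is (LY1) for each structure) and $M=0$ (that is (CY1)); they do not give the vanishing of each monomial coefficient separately, so the step ``every coefficient in every expanded axiom vanishes'' is not available. What actually survives is $(k_1^2-k_1)A_1+(k_2^2-k_2)A_2$, which is not zero for general $k_1,k_2$ unless the binary and ternary cyclic sums vanish separately. The obstruction is already visible for a single Lie Yamaguti algebra: $(L,k[\cdot,\cdot],k[\cdot,\cdot,\cdot])$ satisfies (LY2)--(LY4) but turns (LY1) into $k^2A+kB=0$, which holds for all $k$ only when $A=B=0$. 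So this implication cannot be closed by your bookkeeping as written; it would require either the extra hypothesis that $[[x,y],z]_i+c.p.$ and $[x,y,z]_i+c.p.$ vanish separately for $i=1,2$, or a restriction such as $k_1+k_2=1$, or a quadratic rescaling of the trilinear brackets.

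The other direction of your plan is sound: evaluating at $(1,0)$, $(0,1)$ and $(1,1)$ and subtracting isolates the cross-term blocks even for the inhomogeneous axiom (LY1), and your matching of cross terms with the compatibility identities, including the swap (LY3)$\leftrightarrow$(CY4) and (LY4)$\leftrightarrow$(CY3), is correct. Note that the paper states this proposition without proof, so there is no argument there to compare against or to fall back on; the gap above is yours to repair, and as matters stand it reflects a genuine asymmetry between the two directions of the claimed equivalence rather than a mere omission of detail.
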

	Let $(L_1, [\cdot, \cdot]_1)$ and $(L_2, [\cdot, \cdot]_2)$ be two Lie algebras, then $(L_1 \oplus L_2, [\cdot, \cdot]_{\ltimes})$ is a Lie algebra with the bracket $[\cdot, \cdot]_{\ltimes}=[\cdot, \cdot]_1+[\cdot, \cdot]_2$. In the next proposition, we show that the direct sum of two Lie Yamaguti algebras is a Lie Yamaguti algebra.
	\begin{prop}
		Let $(L_1, [\cdot, \cdot]_1, [\cdot, \cdot, \cdot]_1)$ and $(L_2, [\cdot, \cdot]_2, [\cdot, \cdot, \cdot]_2)$ be two Lie Yamaguti algebras, there is a Lie Yamaguti algebra $(L_1 \oplus L_2, [\cdot, \cdot]_{\ltimes}, [\cdot, \cdot, \cdot]_{\ltimes})$ where 
		\begin{align*}
			{[x_1+x_2, y_1+y_2]}_{\ltimes}&={[x_1, y_1]}_1+{[x_2, y_2]}_2 \\
			{[x_1+x_2, y_1+y_2, z_1+z_2]}_{\ltimes}&={[x_1, y_1, z_1]}_1+{[x_2, y_2, z_2]}_2
		\end{align*}
	\end{prop}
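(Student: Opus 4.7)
The bracket and triple bracket defined on $L_1\oplus L_2$ are essentially component-wise: they vanish on any ``mixed'' input (for instance $[x_1,y_2]_{\ltimes}=0$ for $x_1\in L_1,y_2\in L_2$) and restrict to the given operations on each factor. My plan is first to record this explicitly, namely that for $x=x_1+x_2$, $y=y_1+y_2$, $z=z_1+z_2\in L_1\oplus L_2$ with $x_i,y_i,z_i\in L_i$, one has
\[
[x,y]_{\ltimes}=[x_1,y_1]_1+[x_2,y_2]_2,\qquad [x,y,z]_{\ltimes}=[x_1,y_1,z_1]_1+[x_2,y_2,z_2]_2,
\]
and then to observe that $[\cdot,\cdot]_{\ltimes}$ is automatically antisymmetric (since each $[\cdot,\cdot]_i$ is) and $[\cdot,\cdot,\cdot]_{\ltimes}$ is antisymmetric in its first two arguments, so the maps have the correct symmetry type $\wedge^2(L_1\oplus L_2)\to L_1\oplus L_2$ and $\wedge^2(L_1\oplus L_2)\otimes(L_1\oplus L_2)\to L_1\oplus L_2$.

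Next, I would verify the four Lie Yamaguti identities (LY1)--(LY4) one by one. The key reduction is that every expression appearing in (LY1)--(LY4) is multilinear in its arguments, and each of the operations $[\cdot,\cdot]_{\ltimes}$, $[\cdot,\cdot,\cdot]_{\ltimes}$ preserves the direct sum decomposition $L_1\oplus L_2$: the $L_1$-component of any iterated bracket depends only on the $L_1$-components of the inputs, and likewise for $L_2$. Consequently, for inputs $x=x_1+x_2,\ldots,t=t_1+t_2$, each side of (LY1)--(LY4) splits as a sum of the corresponding identity evaluated in $L_1$ on the tuple $(x_1,\ldots,t_1)$ plus the identity in $L_2$ on the tuple $(x_2,\ldots,t_2)$. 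Since $L_1$ and $L_2$ are Lie Yamaguti algebras, both summands vanish, whence the identity holds in $L_1\oplus L_2$.

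Because the verification is uniform across all four axioms, I would present the argument only for (LY1) in detail, to illustrate the component-splitting, and then remark that (LY2), (LY3), (LY4) follow by the same bookkeeping. No real obstacle is expected: the only step that requires any care is tracking that no ``cross terms'' $[x_1,y_2]_{\ltimes}$, $[x_1,y_1,z_2]_{\ltimes}$, etc.\ arise when expanding the brackets, which is immediate from the defining formulas. Thus the proof reduces to pointing out that $L_1\oplus L_2$ with these operations is simply the product Lie Yamaguti algebra, and the identities hold factor by factor.
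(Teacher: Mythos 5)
Your componentwise argument is correct: since both operations on $L_1\oplus L_2$ act separately on each summand, every iterated bracket appearing in (LY1)--(LY4) splits into its $L_1$-part and $L_2$-part, and each part vanishes because $L_1$ and $L_2$ are Lie Yamaguti algebras. The paper states this proposition without supplying any proof, so there is nothing to compare against; your proof is the standard one and fills that gap adequately.
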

	We generalize this proposition to the case of compatible Lie Yamaguti algebras as follows
	\begin{prop}
		Let $(L_1, \lceil \cdot, \cdot \rceil_1, \lceil \cdot, \cdot, \cdot \rceil_1)$ and $(L_2, \lceil \cdot, \cdot \rceil_2, \lceil \cdot, \cdot, \cdot \rceil_2)$ be two compatible Lie Yamaguti algebras, there is a compatible Lie Yamaguti algebras $(L_1 \oplus L_2, \lceil \cdot, \cdot \rceil_{\ltimes}, \lceil \cdot, \cdot, \cdot \rceil_{\ltimes})$ where 
		\begin{align*}
			\lceil \cdot, \cdot \rceil_{\ltimes}&=\lceil \cdot, \cdot \rceil_1+\lceil \cdot, \cdot \rceil_2 \\
			\lceil \cdot, \cdot, \cdot \rceil_{\ltimes}&=\lceil \cdot, \cdot, \cdot \rceil_1+\lceil \cdot, \cdot, \cdot \rceil_2.
		\end{align*}
	\end{prop}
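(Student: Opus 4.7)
The plan is to avoid a direct grind through \textbf{(CY1)}--\textbf{(CY4)} for the proposed structure and instead leverage Proposition \ref{prop3.1}, which reduces compatibility to the statement that every scalar linear combination of the two brackets yields a Lie Yamaguti structure. Unpacking the notation: each $L_i$ carries two bilinear brackets $[\cdot,\cdot]_{i,1},[\cdot,\cdot]_{i,2}$ and two trilinear brackets $[\cdot,\cdot,\cdot]_{i,1},[\cdot,\cdot,\cdot]_{i,2}$, and the proposed direct sum structure is $[\cdot,\cdot]_{\ltimes,j} = [\cdot,\cdot]_{1,j}\oplus [\cdot,\cdot]_{2,j}$ and $[\cdot,\cdot,\cdot]_{\ltimes,j} = [\cdot,\cdot,\cdot]_{1,j}\oplus [\cdot,\cdot,\cdot]_{2,j}$ for $j=1,2$, in the componentwise sense of the preceding proposition on direct sums of Lie Yamaguti algebras.

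First I would check that each of $(L_1\oplus L_2,[\cdot,\cdot]_{\ltimes,j},[\cdot,\cdot,\cdot]_{\ltimes,j})$ is a Lie Yamaguti algebra for $j=1,2$. This is immediate from the preceding direct sum proposition applied to the pair $(L_1,[\cdot,\cdot]_{1,j},[\cdot,\cdot,\cdot]_{1,j})$ and $(L_2,[\cdot,\cdot]_{2,j},[\cdot,\cdot,\cdot]_{2,j})$, which by hypothesis are individually Lie Yamaguti algebras. Then, by Proposition \ref{prop3.1}, it suffices to show that for every $k_1,k_2\in\mathbb{K}$ the linear combination
\begin{equation*}
k_1[\cdot,\cdot]_{\ltimes,1}+k_2[\cdot,\cdot]_{\ltimes,2},\qquad k_1[\cdot,\cdot,\cdot]_{\ltimes,1}+k_2[\cdot,\cdot,\cdot]_{\ltimes,2}
\end{equation*}
is a Lie Yamaguti structure on $L_1\oplus L_2$.

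The key observation driving the argument is that direct sum commutes with linear combination: for any $k_1,k_2$,
\begin{equation*}
k_1[\cdot,\cdot]_{\ltimes,1}+k_2[\cdot,\cdot]_{\ltimes,2}
=\bigl(k_1[\cdot,\cdot]_{1,1}+k_2[\cdot,\cdot]_{1,2}\bigr)\oplus\bigl(k_1[\cdot,\cdot]_{2,1}+k_2[\cdot,\cdot]_{2,2}\bigr),
\end{equation*}
and similarly for the trilinear part. By the compatibility of $L_1$ (resp.\ $L_2$), applying Proposition \ref{prop3.1} in the forward direction, each summand on the right is a Lie Yamaguti structure on the corresponding factor. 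The preceding direct sum proposition then yields that their componentwise sum is a Lie Yamaguti structure on $L_1\oplus L_2$. Applying Proposition \ref{prop3.1} in the reverse direction to the direct sum yields the compatibility axioms \textbf{(CY1)}--\textbf{(CY4)}, completing the proof.

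There is no serious obstacle: everything reduces to the bookkeeping observation that ``$\oplus$ commutes with $k_1(\cdot)+k_2(\cdot)$''. The only mild pitfall is purely notational, namely keeping the four bilinear brackets and four trilinear brackets on $L_1\oplus L_2$ organized so that one does not mix the index $j\in\{1,2\}$ labelling the two compatible structures with the index labelling the factor $L_i$; using the two-approach strategy above (invoke Proposition \ref{prop3.1} twice, sandwiching the direct sum proposition for ordinary Lie Yamaguti algebras) bypasses this altogether and avoids writing out the cyclic sums in \textbf{(CY1)}--\textbf{(CY2)} and the six-term identities in \textbf{(CY3)}--\textbf{(CY4)} by hand.
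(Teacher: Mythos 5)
Your argument is correct, and since the paper states this proposition without any proof there is nothing to compare it against directly. The intended argument is presumably the direct one: each axiom among \textbf{(CY1)}--\textbf{(CY4)} (together with (LY1)--(LY4) for each of the two structures on $L_1\oplus L_2$), evaluated on elements $x_1+x_2,\dots$, splits by the componentwise definition of the brackets into the corresponding axiom for $L_1$ plus the corresponding axiom for $L_2$, each of which vanishes by hypothesis; this is a two-line verification. Your route --- sandwiching the direct-sum proposition for plain Lie Yamaguti algebras between the two directions of Proposition \ref{prop3.1}, via the observation that $\oplus$ commutes with $k_1(\cdot)+k_2(\cdot)$ --- is a valid structural repackaging of the same content, and your bookkeeping identity for the linear combination of the direct-sum brackets is right. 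One caveat worth flagging: Proposition \ref{prop3.1} is itself delicate for Lie Yamaguti algebras, because (LY1) is not homogeneous under the simultaneous rescaling $([\cdot,\cdot],[\cdot,\cdot,\cdot])\mapsto(k[\cdot,\cdot],k[\cdot,\cdot,\cdot])$: the bilinear cyclic sum scales as $k^2$ while the trilinear one scales as $k$, so ``every linear combination defines a Lie Yamaguti structure'' is a stronger condition than it first appears, and the paper never proves that proposition either. By invoking it in both directions you import whatever reading of that statement the paper intends; the direct componentwise check of \textbf{(CY1)}--\textbf{(CY4)} avoids this dependency entirely and is no longer.
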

	The definition of a homomorphism between two Lie Yamaguti algebras is provided in \cite{T1}, while that of a homomorphism between two compatible Lie algebras is provided in \cite{L1}. To define the homomorphism between two compatible Lie Yamaguti algebras, we combine the previous two definitions as follows.
	\begin{defn}
		A homomorphism between two compatible Lie Yamaguti algebras\\ $(L, [\cdot, \cdot]_1, [\cdot, \cdot, \cdot]_1, [\cdot, \cdot]_2, [\cdot, \cdot, \cdot]_2)$ and $(K, \{\cdot, \cdot\}_1, \{\cdot, \cdot, \cdot\}_1, \{\cdot, \cdot\}_2, \{\cdot, \cdot, \cdot\}_2)$ is a linear map $\varphi : L \rightarrow K$ such that $\varphi$ is a Lie-Yamaguati algebra homomorphism between $(L, [\cdot, \cdot]_1, [\cdot, \cdot, \cdot]_1)$ and $(K, \{\cdot, \cdot\}_1, \{\cdot, \cdot, \cdot\}_1)$, as well as between $(L, [\cdot, \cdot]_2, [\cdot, \cdot, \cdot]_2)$ and $(K, \{\cdot, \cdot\}_2, \{\cdot, \cdot, \cdot\}_2)$. In addition, for $i=1, 2$, we have
		\begin{align*}
			\varphi([x, y]_i)&=\{\varphi(x), \varphi(y)\}_i \\
			\varphi([x, y, z]_i)&=\{\varphi(x), \varphi(y), \varphi(z)\}_i.
		\end{align*}
	\end{defn}
	\begin{defn}
		\label{def3.3}
		A representation of a compatible Lie Yamaguti algebras $(L, \lceil\cdot, \cdot \rceil, \lceil\cdot, \cdot, \cdot \rceil )$ on a vector space $V$ is couple $(\rho, \mu)$ where 
		\begin{align*} &\lceil\cdot, \cdot \rceil=k_1[\cdot, \cdot]_1+k_2[\cdot, \cdot]_2, \\ &\lceil\cdot, \cdot, \cdot \rceil=k_1[\cdot, \cdot, \cdot]_1+k_2[\cdot, \cdot, \cdot]_2 , \\
			&\rho=k_1 \rho_1+k_2 \rho_2 \text{ and } \mu=k_1 \mu_1+k_2 \mu_2.
		\end{align*}
		such that:\begin{itemize}
		    \item  $(V;\rho_1, \mu_1)$ is a representation of $(L, [\cdot, \cdot]_1, [\cdot, \cdot, \cdot]_1)$ on $V$,
      \item $(V;\rho_2, \mu_2)$ is a representation of $(L, [\cdot, \cdot]_2, [\cdot, \cdot, \cdot]_2)$ on $V$, 
      \item 
      and the following equations hold:
		\begin{enumerate}\item $ \mu_2([x, y]_1, z)+\mu_1([x, y]_2, z)= \mu_2(x, z)\rho_1(y)+\mu_1(x, z)\rho_2(y)-\mu_2(y, z)\rho_1(x)-\mu_1(y, z)\rho_2(x), $ 
			\item$ \mu_2(x, [y, z]_1)+\mu_1(x, [y, z]_2)=\rho_2(y)\mu_1(x, z)+\rho_1(y)\mu_2(x, z)-\rho_2(z)\mu_1(x, y)-\rho_1(z)\mu_2(x, y) , $ 
			\item$ \rho_1([x, y, z]_2)+\rho_2([x, y, z]_1)=[D_1(x, y), \rho_2(z)]+[D_2(x, y), \rho_1(z)], $  \item $\mu_2(z, w)\mu_1(x, y)+\mu_1(z, w)\mu_2(x, y)+D_2(y, z)\mu_1(x, w)+D_1(y, z)\mu_2(x, w)\\=\mu_2(y, w)\mu_1(x, z)+\mu_1(y, w)\mu_2(x, z) +\mu_2(x, [y, z, w]_1)+\mu_1(x, [y, z, w]_2) $, 
			\item $\mu_2([x, y, z]_1, w)+\mu_1([x, y, z]_2, w)+\mu_2(z, [x, y, w]_1)+\mu_1(z, [x, y, w]_2)\\=[D_2(x, y), \mu_1(z, w)]_1+[D_1(x, y), \mu_2(z, w)]_2, $\item
			$\rho_1([x, y]_2)+\rho_2([x, y]_1)=\rho_1(x)\rho_2(y)-\rho_1(y)\rho_2(x)
			+\rho_2(x)\rho_1(y)-\rho_2(y)\rho_1(x). \label{eq1.3}$\end{enumerate}
		We denote $D=D_{\rho, \mu}$, $D_1=D_{\rho_1, \mu_1}$ and $D_2=D_{\rho_2, \mu_2}$. From Eq. \eqref{eq1.3}, we obtain 
		$D=D_1+ D_2$.\end{itemize} 
	\end{defn}
	\begin{ex}
		Let $(L, [\cdot, \cdot]_1, [\cdot, \cdot, \cdot]_1, [\cdot, \cdot]_2, [\cdot, \cdot, \cdot]_2)$ be a compatible Lie Yamaguti algebras. We define linear maps $ad:L \rightarrow gl(L)$ and $\mathfrak{ad}:\otimes^2L \rightarrow gl(L)$ by $x \mapsto ad_x$ and $(x, y) \mapsto \mathfrak{ad}_{x, y}$ respectively. More specifically \begin{align*}
		    ad_x(z)=\lceil x, z\rceil \text{ and } \mathfrak{ad}_{x, y}(z)=\lceil z, x, y\rceil ,~~~\forall~~ z \in L.
		\end{align*}
		Then $(ad, \mathfrak{ad})$ forms a representation of $L$ on itself, where $\mathfrak{L}:=D_{ad, \mathfrak{ad}}$ is given by 
		\begin{equation*}
			\mathfrak{L}_{x, y}(z)=[x, y, z], \quad \forall z \in L.
		\end{equation*}
		Where 
		\begin{align*}
			ad&=k_1 ad_1 + k_2 ad_2, \\
			\mathfrak{ad}&=k_1 \mathfrak{ad}_1 + k_2 \mathfrak{ad}_2, \\
			\mathfrak{L}&=\mathfrak{L}_1+\mathfrak{L}_2;\quad (\mathfrak{L}_i=D_{ad_i, \mathfrak{ad}_i}, i =1, 2), \\
			\lceil \cdot, \cdot \rceil&=k_1 [\cdot, \cdot]_1 + k_2 [\cdot, \cdot]_2, \\
			\lceil \cdot, \cdot, \cdot \rceil&=k_1 [\cdot, \cdot, \cdot]_1+k_2 [\cdot, \cdot, \cdot]_2.
		\end{align*}
	\end{ex}
	The representation of a compatible Lie Yamaguti algebra can be characterized by $ \textbf{\emph{ the semidirect product}}$ compatible Lie Yamaguti algebras, like the case of Lie algebras. In \cite{Z3} the authors in proposition 2.10, have defined $ \textbf{\emph{ the semidirect product }}$ Lie Yamaguti algebras. In the next proposition, we define $\textbf{\emph{the semidirect product }}$ of compatible Lie Yamaguti algebras
	\begin{prop}
		Let $(V;\rho, \mu)$ be a representation of a compatible Lie Yamaguti algebras $(L, \lceil \cdot, \cdot \rceil, \lceil \cdot, \cdot, \cdot \rceil)$. Define two skew-symmetric bilinear operation $\{ \cdot, \cdot\}_1, \{ \cdot, \cdot\}_2 : \wedge^2(L \oplus V) \rightarrow L \oplus V$ by 
		\begin{align*}
			\{x+u, y+v\}_1&=[x, y]_1+\rho_1(x)v-\rho_1(y)u, \\
			\{x+u, y+v\}_2&=[x, y]_2+\rho_2(x)v-\rho_2(y)u.
		\end{align*}
		And define two trilinear operations $\{ \cdot, \cdot, \cdot\}_1, \{ \cdot, \cdot, \cdot\}_2 : \wedge^2(L \oplus V) \otimes (L\oplus V) \rightarrow L \oplus V$ by 
		\begin{align*}
			\{x+u, y+v, z+w\}_1&=[x, y, z]_1+\mu_1(y, z)u-\mu_1(x, z)v+D_1(x, y)w, \\
			\{x+u, y+v, z+w\}_2&=[x, y, z]_2+\mu_2(y, z)u-\mu_2(x, z)v+D_2(x, y)w,
		\end{align*}
		for any $x, y \in L$ and $u, v, w \in V$. Then $(L \oplus V, \{ \cdot, \cdot\}_1, \{ \cdot, \cdot, \cdot\}_1, \{ \cdot, \cdot\}_2, \{ \cdot, \cdot, \cdot\}_2)$ \Big(or simply denoted by $(L \oplus V, \{ \cdot, \cdot\}_C, \{ \cdot, \cdot, \cdot\}_C )$
		where $\{ \cdot, \cdot\}_C=k_1 \{ \cdot, \cdot\}_1+k_2 \{ \cdot, \cdot\}_2$ and $\{ \cdot, \cdot, \cdot\}_C=k_1 \{ \cdot, \cdot, \cdot\}_1 + k_2 \{ \cdot, \cdot, \cdot\}_2$ \Big) is a compatible Lie Yamaguti algebras with 
		\begin{align*}
			\rho&=k_1 \rho_1 +k_2\rho_2, 
			\qquad\mu=k_1 \mu_1 +k_2\mu_2, \\
			D_1(x, y)&= \mu_1(y, x)-\mu_1(x, y), 
			\qquad D_2(x, y)= \mu_2(y, x)-\mu_2(x, y), \\
			\lceil \cdot, \cdot\rceil&=k_1 [\cdot, \cdot]_1+k_2 [\cdot, \cdot]_2, \qquad 
			\lceil \cdot, \cdot, \cdot \rceil=k_1 [\cdot, \cdot, \cdot]_1+k_2 [\cdot, \cdot, \cdot]_2.
		\end{align*} 
	\end{prop}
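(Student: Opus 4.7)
The plan is to decompose the verification into two stages, exploiting the fact that the individual Lie Yamaguti structure on a semidirect product is already treated elsewhere in the paper, and then focus on checking the four compatibility conditions of Definition \ref{defn3.1}.

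First, for each index $i \in \{1,2\}$, I would observe that $(V;\rho_i,\mu_i)$ is by hypothesis a representation of the Lie Yamaguti algebra $(L,[\cdot,\cdot]_i,[\cdot,\cdot,\cdot]_i)$. Hence, by the semidirect product proposition stated earlier in the paper (the classical construction of Lie Yamaguti algebra structures on $L\oplus V$ from a representation), $(L\oplus V,\{\cdot,\cdot\}_i,\{\cdot,\cdot,\cdot\}_i)$ is a Lie Yamaguti algebra. This takes care of axioms (LY1)--(LY4) for each bracket separately, so nothing about the individual Yamaguti structures remains to be verified.

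Next, I would verify (\textbf{CY1})--(\textbf{CY4}) of Definition \ref{defn3.1} for the pair of structures on $L\oplus V$. The technique is uniform: substitute elements of the form $x+u$, $y+v$, $z+w$, $a+\alpha$, $b+\beta$ into each identity, expand using the defining formulas for $\{\cdot,\cdot\}_i$ and $\{\cdot,\cdot,\cdot\}_i$, and separate the result into its $L$-component and $V$-component. In every case the $L$-component reproduces exactly the corresponding compatibility condition of $(L,\lceil\cdot,\cdot\rceil,\lceil\cdot,\cdot,\cdot\rceil)$ and therefore vanishes by hypothesis. The $V$-component will be a linear combination of terms built from $\rho_1,\rho_2,\mu_1,\mu_2,D_1,D_2$ applied to the $L$-entries and $V$-entries; the task is to match this combination to the six compatibility equations listed in Definition \ref{def3.3}. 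Concretely: the $V$-part of (\textbf{CY1}) will fall to equation (6) of Definition \ref{def3.3}; the $V$-part of (\textbf{CY2}) will reduce to equation (1); and the $V$-parts of (\textbf{CY3}) and (\textbf{CY4}) will require equations (2)--(5), together with the identity $D=D_1+D_2$ derived from equation (6) via the formula for $D_{\rho,\mu}$ in Definition \ref{defn2.3}.

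The main obstacle is purely combinatorial rather than conceptual: condition (\textbf{CY3}) produces, after substitution, a long alternating sum of trilinear operations whose $V$-component contains simultaneously terms with $\mu_i(\cdot,\cdot)$ acting on images of trilinear brackets, terms with $D_i$ composed with $\mu_j$, and terms arising from the non-trivial action on each of the three slots of the trilinear bracket. Keeping track of signs and of which of $\rho_1\mu_2$, $\rho_2\mu_1$, $\mu_1 D_2$, $\mu_2 D_1$, etc., each summand belongs to is the real work. Once these terms are grouped correctly, each group is exactly a mixed-index relation from Definition \ref{def3.3} applied to the appropriate triple, and the verification closes. A clean bookkeeping device is to treat each of conditions (1)--(5) of Definition \ref{def3.3} as a rewriting rule and apply them in a fixed order (slot-by-slot) during the expansion, which avoids redundant reshuffling of terms.
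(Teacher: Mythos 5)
Your proposal is correct, and the underlying computation is the same as the paper's, but you organize it through a different decomposition. The paper works with the characterization of Proposition \ref{prop3.1}: it forms the single combined bracket $\{\cdot,\cdot\}_C=k_1\{\cdot,\cdot\}_1+k_2\{\cdot,\cdot\}_2$ (and its trilinear companion) and verifies the Lie Yamaguti axioms for it directly, expanding everything into $k_1^2$, $k_1k_2$, $k_2^2$ blocks; the pure blocks vanish because each $(L,[\cdot,\cdot]_i,[\cdot,\cdot,\cdot]_i)$ is a Lie Yamaguti algebra and $(V;\rho_i,\mu_i)$ is a representation of it, while the cross blocks vanish by (\textbf{CY1})--(\textbf{CY4}) together with the mixed equations of Definition \ref{def3.3} (the paper writes out (LY1) in full, using equation (6) of that definition, and asserts the remaining axioms are similar). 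You instead verify Definition \ref{defn3.1} directly: the two individual semidirect products are Lie Yamaguti algebras by the earlier semidirect product proposition, and only the four mixed conditions remain, whose $V$-components you match against equations (1)--(6) of Definition \ref{def3.3}. These mixed conditions are precisely the $k_1k_2$ cross terms of the paper's expansion, so the two routes prove the same identities; your version buys a cleaner separation (no re-derivation of the pure terms), while the paper's version makes the ``arbitrary linear combination'' characterization explicit. One small refinement to your bookkeeping: the $V$-component of (\textbf{CY2}) also produces terms of the form $D_2([x,y]_1,z)s+D_1([x,y]_2,z)s$ summed cyclically, so in addition to equation (1) of Definition \ref{def3.3} you need the mixed analogue of the identity $D([x,y],z)+c.p.=0$, which is itself a consequence of equations (1) and (6); this is routine but worth recording so the matching closes.
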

	\begin{proof} For $x, y, z \in L$ and $u, v, w \in V$, we have 
		\begin{align*}
			\{x+u, y+v\}_C&=k_1 \{x+u, y+v\}_1 + k_2 \{x+u, y+v\}_2 \\
			&=k_1 \Big([x, y]_1+\rho_1(x)v-\rho_1(y)u \Big)+k_2 \Big([x, y]_2+\rho_2(x)v-\rho_2(y)u \Big) \\
			&=\lceil x, y \rceil + \rho(x)v-\rho(y)u \\
			&=-(\lceil y, x \rceil +\rho(y)u -\rho(x)v) \\
			&=-\{y+v, x+u\}_C
		\end{align*}
		that gives the skew symmetry of $\{ \cdot, \cdot\}_C$, similarly for $\{ \cdot, \cdot, \cdot\}_C$. \\
		For the first identity $(LY1),$ we have 
		\begin{align*}
			&\{\{x+u, y+v\}_C, z+w\}_C\\=&\{k_1\{x+u, y+v\}_1+k_2\{x+u, y+v\}_2, z+w\}_C \\
			=&k_1\{\{x+u, y+v\}_1, z+w\}_C+k_2 \{\{x+u, y+v\}_2, z+w\}_C \\
			=&k_1 \Big(k_1 \{\{x+u, y+v\}_1, z+w\}_1+k_2 \{\{x+u, y+v\}_1, z+w\}_2 \Big) \\
			&+k_2 \Big(k_1 \{\{x+u, y+v\}_2, z+w\}_1+k_2 \{\{x+u, y+v\}_2, z+w\}_2 \Big) \\
			=&k_1^2\{[x, y]_1+\rho_1(x)v-\rho_1(y)u, z+w\}_1 + k_1k_2 \{[x, y]_1+\rho_1(x)v-\rho_1(y)u, z+w\}_2 \\
			&+k_2k_1 \{[x, y]_2+\rho_2(x)v-\rho_2(y)u, z+w\}_1+k_2^2 \{[x, y]_2+\rho_2(x)v-\rho_2(y)u, z+w\}_2 \\
			=&k_1^2 \Big([[x, y]_1, z]_1+\rho_1([x, y]_1)w-\rho_1(z)W_1 \Big)+k_1k_2 \Big([[x, y]_1, z]_2+\rho_2([x, y]_1)w-\rho_2(z)W_1 \Big) \\
			&+k_2k_1 \Big([[x, y]_2, z]_1+\rho_1([x, y]_2)w-\rho_1(z)W_2 \Big) + k_2^2 \Big([[x, y]_2, z]_2+\rho_2([x, y]_2)w-\rho_2(z)W_2 \Big)
		\end{align*}
		where $W_1= \rho_1(x)v- \rho_1(y)u$ and $W_2= \rho_2(x)v- \rho_2(y)u$.\\
		Similarly, we obtain 
		\begin{align*}
			&\{\{y+v, z+w\}_C, x+u\}_C\\=&
			k_1^2 \Big([[y, z]_1, x]_1+\rho_1([y, z]_1)u-\rho_1(x)U_1 \Big) 
			+k_1k_2 \Big([[y, z]_1, x]_2+\rho_2([y, z]_1)u-\rho_2(x)U_1 \Big) \\
			&+k_2k_1 \Big([[y, z]_2, x]_1+\rho_1([y, z]_2)u-\rho_1(x)U_2 \Big) 
			+k_2^2 \Big([[y, z]_2, x]_2+\rho_2([y, z]_2)u-\rho_2(x)U_2 \Big) 
		\end{align*}
		where $U_1=\rho_1(y)w-\rho_1(z)v$ and $U_2=\rho_2(y)w-\rho_2(z)v$.\\
		And finally, we obtain 
		\begin{align*}
			&\{\{z+w, x+u\}_C, y+v\}_C\\=&
			k_1^2 \Big([[z, x]_1, y]_1+\rho_1([z, x]_1)v-\rho_1(y)V_1 \Big) 
			+k_1k_2 \Big([[z, x]_1, y]_2+\rho_2([z, x]_1)v-\rho_2(y)V_1 \Big) \\
			&+k_2k_1 \Big([[z, x]_2, y]_1+\rho_1([z, x]_2)v-\rho_1(y)V_2 \Big) 
			+k_2^2 \Big([[z, x]_2, y]_2+\rho_2([z, x]_2)v-\rho_2(y)V_2 \Big)	
		\end{align*}
		where $V_1=\rho_1(z)u-\rho_1(x)w$ and $V_2=\rho_2(z)u-\rho_2(x)w$.\\
		In the first step, we have 
		\begin{align*}
			[[x, y]_1, z]_1+c.p.=0 \text{ and } [[x, y]_2, z]_2+c.p.=0
		\end{align*}
		as $(L, [\cdot, \cdot]_1, [\cdot, \cdot, \cdot]_1)$ and $(L, [\cdot, \cdot]_2, [\cdot, \cdot, \cdot]_2)$ are two Lie Yamaguti algebras, that lead us to the following sum 
		\begin{align*}
			&\{\{x+u, y+v\}_C, z+w\}_C+\{\{y+v, z+w\}_C, x+u\}_C+\{\{z+w, x+u\}_C, y+v\}_C \\
			=&k_1^2 \Big(\rho_1([x, y]_1)w-\rho_1(z)W_1 \Big)+k_1k_2 \Big([[x, y]_1, z]_2+\rho_2([x, y]_1)w-\rho_2(z)W_1 \Big) \\
			&+k_2k_1 \Big([[x, y]_2, z]_1+\rho_1([x, y]_2)w-\rho_1(z)W_2 \Big) + k_2^2 \Big(\rho_2([x, y]_2)w-\rho_2(z)W_2 \Big) \\
			&+k_1^2 \Big(\rho_1([y, z]_1)u-\rho_1(x)U_1 \Big) 
			+k_1k_2 \Big([[y, z]_1, x]_2+\rho_2([y, z]_1)u-\rho_2(x)U_1 \Big) \\
			&+k_2k_1 \Big([[y, z]_2, x]_1+\rho_1([y, z]_2)u-\rho_1(x)U_2 \Big) 
			+k_2^2 \Big(\rho_2([y, z]_2)u-\rho_2(x)U_2 \Big) \\
			&+k_1^2 \Big(\rho_1([z, x]_1)v-\rho_1(y)V_1 \Big) 
			+k_1k_2 \Big([[z, x]_1, y]_2+\rho_2([z, x]_1)v-\rho_2(y)V_1 \Big) \\
			&+k_2k_1 \Big([[z, x]_2, y]_1+\rho_1([z, x]_2)v-\rho_1(y)V_2 \Big) 
			+k_2^2 \Big(\rho_2([z, x]_2)v-\rho_2(y)V_2 \Big). 
		\end{align*}
		On the other hand,  by $(CLY1)$, we have 
		\begin{align*}
			[[x, y]_1, z]_2+c.p.+[[x, y]_2, z]_1+c.p.=0.
		\end{align*}
		This leads us to the following sum 
		\begin{align*}
			&\{\{x+u, y+v\}_C, z+w\}_C+\{\{y+v, z+w\}_C, x+u\}_C+\{\{z+w, x+u\}_C, y+v\}_C \\
			=&k_1^2 \Big(\rho_1([x, y]_1)w-\rho_1(z)W_1 \Big)+k_1k_2 \Big(\rho_2([x, y]_1)w-\rho_2(z)W_1 \Big) \\
			&+k_2k_1 \Big(\rho_1([x, y]_2)w-\rho_1(z)W_2 \Big) + k_2^2 \Big(\rho_2([x, y]_2)w-\rho_2(z)W_2 \Big) \\
			&+k_1^2 \Big(\rho_1([y, z]_1)u-\rho_1(x)U_1 \Big) 
			+k_1k_2 \Big(\rho_2([y, z]_1)u-\rho_2(x)U_1 \Big) \\
			&+k_2k_1 \Big(\rho_1([y, z]_2)u-\rho_1(x)U_2 \Big) 
			+k_2^2 \Big(\rho_2([y, z]_2)u-\rho_2(x)U_2 \Big) \\
			&+k_1^2 \Big(\rho_1([z, x]_1)v-\rho_1(y)V_1 \Big) 
			+k_1k_2 \Big(\rho_2([z, x]_1)v-\rho_2(y)V_1 \Big) \\
			&+k_2k_1 \Big(\rho_1([z, x]_2)v-\rho_1(y)V_2 \Big) 
			+k_2^2 \Big(\rho_2([z, x]_2)v-\rho_2(y)V_2 \Big) .
		\end{align*}
		In the last step,  by Eq. \eqref{eq1.3}, we obtain
		\begin{align*}
			&\{\{x+u, y+v\}_C, z+w\}_C+\{\{y+v, z+w\}_C, x+u\}_C+\{\{z+w, x+u\}_C, y+v\}_C \\
			=&k_1^2 \Big(\rho_1(x)\rho_1(y)w-\rho_1(y)\rho_1(x)w-\rho_1(z)\rho_1(x)v+\rho_1(z)\rho_1(y)u\Big) \\
			&+k_2^2 \Big(\rho_2(x)\rho_2(y)w-\rho_2(y)\rho_2(x)w-\rho_2(z)\rho_2(x)v+\rho_2(z)\rho_2(y)u\Big) \\
			&+k_1k_2 \Big( \rho_1(x)\rho_2(y)w-\rho_1(y)\rho_2(x)w+\rho_2(x)\rho_1(y)w-\rho_2(y)\rho_1(x)w \\
			&-\rho_2(z)\rho_1(x)v+\rho_2(z)\rho_1(y)u-\rho_1(z)\rho_2(x)v+\rho_1(z)\rho_2(y)u
			\Big) \\
			&+k_1^2 \Big(\rho_1(z)\rho_1(x)v-\rho_1(x)\rho_1(z)v-\rho_1(y)\rho_1(z)u+\rho_1(y)\rho_1(x)w\Big) \\
			&+k_2^2 \Big(\rho_2(z)\rho_2(x)v-\rho_2(x)\rho_2(z)v-\rho_2(y)\rho_2(z)u+\rho_2(y)\rho_2(x)w\Big) \\
			&+k_1k_2 \Big( \rho_1(z)\rho_2(x)v-\rho_1(x)\rho_2(z)v+\rho_2(z)\rho_1(x)v-\rho_2(x)\rho_1(z)v \\
			&-\rho_2(y)\rho_1(z)u+\rho_2(y)\rho_1(x)w-\rho_1(y)\rho_2(z)u+\rho_1(y)\rho_2(x)w
			\Big) \\
			&+k_1^2 \Big(\rho_1(y)\rho_1(z)u-\rho_1(z)\rho_1(y)u-\rho_1(x)\rho_1(y)w+\rho_1(x)\rho_1(z)v\Big) \\
			&+k_2^2 \Big(\rho_2(y)\rho_2(z)u-\rho_2(z)\rho_2(y)u-\rho_2(x)\rho_2(y)w+\rho_2(x)\rho_2(z)v\Big) \\
			&+k_1k_2 \Big( \rho_1(y)\rho_2(z)u-\rho_1(z)\rho_2(y)u+\rho_2(y)\rho_1(z)u-\rho_2(z)\rho_1(y)u \\
			&-\rho_2(x)\rho_1(y)w+\rho_2(x)\rho_1(z)v-\rho_1(x)\rho_2(y)w+\rho_1(x)\rho_2(z)v
			\Big) \\
			=&0.
		\end{align*}
		It implies that
		\begin{equation}
			\{\{x+u, y+v\}_C, z+w\}_C+\{\{y+v, z+w\}_C, x+u\}_C+\{\{z+w, x+u\}_C, y+v\}_C=0.
			\label{eq1.4}
		\end{equation}
		On the other hand, we have 
		\begin{align*}
			&\{x+u, y+v, z+w\}_C+\{y+v, z+w, x+u\}_C+\{z+w, x+u, y+v\}_C\\
			=&k_1\Big([x, y, z]_1+D_1(x, y)w+\mu_1(y, z)u-\mu_1(x, z)v 
			\Big) 
			+k_2 \Big([x, y, z]_2+D_2(x, y)w+\mu_2(y, z)u-\mu_2(x, z)v 
			\Big) \\ 
			&+k_1\Big([y, z, x]_1+D_1(y, z)u+\mu_1(z, x)v-\mu_1(y, x)w 
			\Big) 
			+k_2\Big([y, z, x]_2+D_2(y, z)u+\mu_2(z, x)v-\mu_2(y, x)w 
			\Big) \\ 
			&+k_1\Big([z, x, y]_1+D_1(z, x)v+\mu_1(x, y)w-\mu_1(z, y)w 
			\Big)+k_2\Big([z, x, y]_2+D_2(z, x)v+\mu_2(x, y)w-\mu_2(z, y)w 
			\Big).
		\end{align*}
		As $(L, [\cdot, \cdot]_1, [\cdot, \cdot, \cdot]_1)$ and $(L, [\cdot, \cdot]_2, [\cdot, \cdot, \cdot]_2)$ are two Lie Yamaguti algebras, we have 
		\begin{align*}
			&[x, y, z]_1+c.p.=0 \\
			&[x, y, z]_2+c.p.=0
		\end{align*}
		This leads us to reduce the previous sum as follows 
		\begin{align*}
			&\{x+u, y+v, z+w\}_C+\{y+v, z+w, x+u\}_C+\{z+w, x+u, y+v\}_C\\
			=&k_1\Big(D_1(x, y)w+\mu_1(y, z)u-\mu_1(x, z)v 
			\Big) 
			+k_2 \Big(D_2(x, y)w+\mu_2(y, z)u-\mu_2(x, z)v 
			\Big) \\ 
			&+k_1\Big(D_1(y, z)u+\mu_1(z, x)v-\mu_1(y, x)w 
			\Big) 
			+k_2\Big(D_2(y, z)u+\mu_2(z, x)v-\mu_2(y, x)w 
			\Big) \\ 
			&+k_1\Big(D_1(z, x)v+\mu_1(x, y)w-\mu_1(z, y)w 
			\Big)+k_2\Big(D_2(z, x)v+\mu_2(x, y)w-\mu_2(z, y)w 
			\Big).	
		\end{align*}
		Moreover, we have  $D_1(x, y)=\mu_1(y, x)-\mu_1(x, y)$ and $D_2(x, y)=\mu_2(y, x)-\mu_2(x, y)$. After some computations, we obtain 
		\begin{equation}
			\label{eq1.5}
			\{x+u, y+v, z+w\}_C+\{y+v, z+w, x+u\}_C+\{z+w, x+u, y+v\}_C=0.	
		\end{equation}
		Hence, $(LY1)$ is verified by using Eqs. \eqref{eq1.4} and  \eqref{eq1.5}. Where $(LY2)$, $(LY3)$ and $(LY4)$ can be prove similarly. 
	\end{proof}
	\begin{re}
		If $(L\oplus V, \{\cdot, \cdot\}_C, \{\cdot, \cdot, \cdot\}_C)$ reduces to a compatible Lie algebras $(L\oplus V, \{\cdot, \cdot\}_C)$ with $\{\cdot, \cdot\}_C=k_1\{\cdot, \cdot\}_1+k_2\{\cdot, \cdot\}_2$. Then we obtain a semidirect compatible Lie algebra $(L, [\cdot, \cdot]_1, [\cdot, \cdot]_2)$ with the a representation $(V;\rho)$, where $\rho=k_1 \rho_1+k_2 \rho_2$. \\
		Also If $(L\oplus V, \{\cdot, \cdot\}_C, \{\cdot, \cdot, \cdot\}_C)$ reduces to a compatible Lie triple system $(L\oplus V, \{\cdot, \cdot, \cdot\}_C)$ with $\{\cdot, \cdot, \cdot\}_C=k_1(\{\cdot, \cdot, \cdot\}_1)+k_2(\{\cdot, \cdot, \cdot\}_2)$. Then we obtain a semidirect compatible Lie triple system $(L, [\cdot, \cdot, \cdot]_1, [\cdot, \cdot, \cdot]_2)$ with the a representation $(V;\mu)$ where $\mu=k_1 \mu_1+k_2 \mu_2$.
	\end{re}
	Derivation of a compatible Lie Yamaguti algebras is given as follows
	\begin{defn}
		A derivation of a compatible Lie Yamaguti algebras $(L, \lceil \cdot, \cdot \rceil, \lceil \cdot, \cdot, \cdot \rceil)$ is a linear map $\delta : L \rightarrow L$, such that $\delta$ is a derivation for both $(L, [\cdot, \cdot]_1, [\cdot, \cdot, \cdot]_1)$ and $(L, [\cdot, \cdot]_2, [\cdot, \cdot, \cdot]_2)$ and verify the following conditions 
		\begin{align*}
			\delta (\lceil \cdot, \cdot \rceil)&=k_1 \delta ([\cdot, \cdot]_1)+k_2 \delta ([\cdot, \cdot]_2) \\
			\delta (\lceil \cdot, \cdot, \cdot \rceil)&=k_1 \delta([\cdot, \cdot, \cdot]_1)+k_2 \delta ([\cdot, \cdot, \cdot]_2)
		\end{align*}
		where 
		\begin{align*}
			\lceil \cdot, \cdot \rceil&=k_1 ([\cdot, \cdot]_1)+k_2 ([\cdot, \cdot]_2) \\
			\lceil \cdot, \cdot, \cdot \rceil&=k_1([\cdot, \cdot, \cdot]_1)+k_2 ([\cdot, \cdot, \cdot]_2)
		\end{align*}
	\end{defn} 
	\begin{ex}
		For a compatible Lie Yamaguti algebras $(L, \lceil \cdot, \cdot \rceil, \lceil \cdot, \cdot, \cdot \rceil)$, the linear map defined by
		\begin{align*}
			T_{(a, b)}=k_1 T_{1_{(a, b)}}+k_2 T_{2_{(a, b)}}:&L \rightarrow L \\
			&z \mapsto \lceil a, b, z \rceil = k_1[a, b, z]_1+k_2 [a, b, z]_2~~for ~~~ a, b, z_1, z_2, z_3 \in L 
		\end{align*} 
		is a derivation of $(L, \lceil \cdot, \cdot \rceil, \lceil \cdot, \cdot, \cdot \rceil).$ \\
		\newline 
		For simplification, we write $T_1$ at the place of $T_{1_{(a, b)}}$ and $T_2$ at the place of $T_{2_{(a, b)}}$, 
		\begin{align*}
			T_{(a, b)}(\lceil x, y \rceil)&=(k_1 T_1+k_2 T_2)\Big( k_1 ([x, y]_1)+k_2 ([x, y]_2) \Big) \\
			&=k_1^2 T_1[x, y]_1+k_2^2[x, y]_2+k_1k_2 T_1[x, y]_2+k_2k_1 T_2[x, y]_1\\
			& \text{ by (CLY5) and proposition \eqref{prop3.1}, we have } \\
			T_{(a, b)}(\lceil x, y \rceil)&=k_1^2[T_1(x), y]_1+k_1^2[x, T_1(y)]_2+k_2^2[T_2(x), y]_2+k_2^2[x, T_2(y)]_2 \\
			&+k_1k_2 \Big([T_1(x), y]_2+ [x, T_1(y)]_2+[T_2(x), y]_1+[x, T_2(y)]_1 \Big) \\
			&=\lceil T_{(a, b)}(x), y \rceil + \lceil x, T_{(a, b)}(y) \rceil.
		\end{align*}
		Similarly,  using (CLY4) and proposition \eqref{prop3.1}, we have
		\begin{equation*}
			T_{(a, b)}(\lceil x, y, z \rceil)=\lceil T_{(a, b)}(x), y, z \rceil +\lceil x, T_{(a, b)}(y), z \rceil +\lceil x, y, T_{(a, b)}(z) \rceil
		\end{equation*}
		which provides $T_{(a, b)}$ as a derivation of $(L, \lceil \cdot, \cdot \rceil, \lceil \cdot, \cdot, \cdot \rceil)$ 
	\end{ex}
\section{Maurer-Cartan characterization and cohomology of compatible Lie Yamaguti algebras} \label{section cohomolgy}
In this section, we introduce the Maurer-Cartan elements of compatible Lie Yamaguti algebras according to the bidifferential graded Lie algebras. We also introduce the cohomology theory of compatible Lie Yamaguti algebras.
\subsection*{Maurer-Cartan characterization of compatible Lie Yamaguti algebras}
In this subsection, we introduce the Maurer-Cartan element of compatible Lie Yamaguti algebras.\\
Let us first recall some notions and basic results from \cite{Z4, L1}. \\
A degree $1$ element $x\in L_1$ is called a Maurer-Cartan element of a differential graded Lie algebra $(L=\oplus_{\mathrm{k}\in\mathbb{Z}}L_k,[\cdot,\cdot],\partial) $ if it satisfies the Maurer-Cartan equation 
\begin{equation*}
	\partial x+\frac{1}{2}[x,x]=0,\quad \forall x\in L.
\end{equation*}
Note that a graded Lie algebra is a special case of differential graded Lie algebra with $\partial=0$. Then, for a graded Lie algebra $(L=\oplus_{\mathrm{k}\in\mathbb{Z}}L_k,[\cdot,\cdot],\partial) $, an element $x\in L_1$ such that $[x,x]=0$ is a Marer-Cartan element of $L$.\\
A permutation $\sigma\in S_n$ is called an $(i, n-i)$-shuffle if $\sigma(1)<\ldots<\sigma(i)$ and $\sigma(i+1)<\ldots<\sigma(n)$. If $i=0$ or $i=n$, we assume $\sigma=\mathrm{id}$, the set of $(i,n-i)$ shuffles is denoted by $\mathcal{S}_{(i,n-i)}$.\\
The Maurer-Cartan element of Lie Yamaguti algebras was introduced in \cite{Z4}. Let $L$ be a vector space,  we have $$\mathfrak{C}^p(L,L):= \left\{
\begin{array}{ll}
	\mathrm{Hom}((\wedge^2L)\otimes\cdots\otimes(\wedge^2L),L)\oplus \mathrm{Hom}((\wedge^2L)\otimes\cdots\otimes(\wedge^2L)\otimes L,L) & \text{ for } p\geq1,\\
	\mathrm{Hom}(L,L)& \text{ for } p=0.	
\end{array}
\right.
$$ 
Then $\mathfrak{C}^\star(L, L)=\oplus_{p\geq0}\mathfrak{C}^p(L, L)$ is a graded vector space where the degree of elements in $\mathfrak{C}^p(L, L)$ is $p$.\\
For $P=(P_1,P_2)\in \mathfrak{C}^p(L,L)$ and $Q=(Q_1,Q_2)\in \mathfrak{C}^q(L,L)$ $(p,q\geq1)$, we define
$P\circ Q:=((P\circ Q)_1,(P\circ Q)_2)$.
\begin{align*}
&	(P\circ Q)_1(X_1,\cdots,X_{p+q})\\
&=\displaystyle\sum_{\sigma\in\mathcal{S}_{(p,q)},\sigma(p+q)=p+q}(-1)^{pq}\mathrm{sign}(\sigma)P_2(X_{\sigma(1)},\cdots,X_{\sigma(p)},Q_1(X_{\sigma(p+1)},\cdots,X_{\sigma(p+q)}))\\
	&+\displaystyle\sum_{k=1}^p(-1)^{(k-1)q}\displaystyle\sum_{\sigma\in\mathcal{S}_{(k-1,q)}}\mathrm{sign}(\sigma)P_1(X_{\sigma(1)},\cdots,X_{\sigma(k-1)},x_{q+k}\wedge Q_2(X_{\sigma(k)},\cdots,X_{\sigma(k+q-1)},y_{k+q})
	,X_{k+q+1},\cdots,X_{p+q})\\
	&+\displaystyle\sum_{k=1}^p(-1)^{(k-1)q}\displaystyle\sum_{\sigma\in\mathcal{S}_{(k-1,q)}}\mathrm{sign}(\sigma)P_1(X_{\sigma(1)},\cdots,X_{\sigma(k-1)}, Q_2(X_{\sigma(k)},\cdots,X_{\sigma(k+q-1)},x_{k+q})\wedge y_{k+q}
	,X_{k+q+1},\cdots,X_{p+q})
\end{align*}
and 
\begin{align*}
	&(P\circ Q)_2(X_1,\cdots,X_{p+q},x)=\\
	&\displaystyle\sum_{\sigma\in\mathcal{S}_{(p,q)}}(-1)^{pq}\mathrm{sign(\sigma)}P_2(X_{\sigma(1)},\cdots,X_{\sigma(p)},Q_2(X_{\sigma(p+1)},\cdots,X_{\sigma(p+q)},x))\\
	&+\displaystyle\sum_{k=1}^p(-1)^{(k-1)q}\displaystyle\sum_{\sigma\in\mathcal{S}_{(k-1,q)}}\mathrm{sign(\sigma)}P_2(X_{\sigma(1)},\cdots,X_{\sigma(k-1)},x_{q+k}\wedge Q_2(X_{\sigma(k)},\cdots,X_{\sigma(k+q-1)},y_{k+q}),X_{k+q+1},\cdots,X_{p+q},x)\\
	&+\displaystyle\sum_{k=1}^p(-1)^{(k-1)q}\displaystyle\sum_{\sigma\in\mathcal{S}_{(k-1,q)}}\mathrm{sign(\sigma)}P_2(X_{\sigma(1)},\cdots,X_{\sigma(k-1)}, Q_2(X_{\sigma(k)},\cdots,X_{\sigma(k+q-1)},x_{k+q})\wedge y_{k+q},X_{k+q+1},\cdots,X_{p+q},x).
\end{align*}
In the sequel we denote $\pi=[\cdot,\cdot]$,  $\omega=[\cdot,\cdot,\cdot]$ and $\Pi=(\pi,\omega)$.
\begin{thm} \cite{Z4} \label{thm MC of LYA}
	With the above notations, the graded vector space $\mathfrak{C}^\star(L,L)$ equipped with the graded commutator
	\begin{equation*}
		[P,Q]_\mathrm{LieY}=P\circ Q-(-1)^{pq} Q\circ P
	\end{equation*}
is a graded Lie algebra. Moreover, if $\Pi\in\mathfrak{C}^1(L,L)$ defines a Lie Yamaguti algebra structure on $L$ then $\Pi$ is a Maurer-Cartan element of the graded Lie algebra $(\mathfrak{C}^\star(L,L),[\cdot,\cdot]_\mathrm{LieY})$
\end{thm}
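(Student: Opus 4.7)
The plan is to mirror the classical strategy that identifies the Nijenhuis--Richardson / Gerstenhaber-type bracket whose Maurer--Cartan elements encode a given algebraic structure, suitably adapted so that the two components (bilinear and trilinear) of $\Pi=(\pi,\omega)$ get handled simultaneously. First I would fix conventions: an element $P=(P_1,P_2)\in\mathfrak{C}^p(L,L)$ is a pair acting on $p$ ``blocks'' $X_i\in \wedge^2 L$, with $P_1$ landing on blocks only and $P_2$ on blocks plus a trailing vector $x\in L$. The composition $P\circ Q$ prescribed in the statement is exactly the way one inserts a Yamaguti $q$-cochain $Q$ into a Yamaguti $p$-cochain $P$: either into the final slot (the $\mathcal{S}_{(p,q)}$ sum using $P_2$), or into one of the two $L$-entries of some internal block $x_{k+q}\wedge y_{k+q}$ (the $\mathcal{S}_{(k-1,q)}$ sums using $P_1$), with $Q_2$ replacing the relevant entry. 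Before anything else I would check that these formulas make $P\circ Q$ well-defined modulo the wedge antisymmetry in each block.

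Next, for the graded-Lie-algebra part, skew-symmetry of $[P,Q]_{\mathrm{LieY}}=P\circ Q-(-1)^{pq}Q\circ P$ is built into the definition, so the substance is the graded Jacobi identity
\[
(-1)^{pr}[[P,Q]_{\mathrm{LieY}},R]_{\mathrm{LieY}}+(-1)^{qp}[[Q,R]_{\mathrm{LieY}},P]_{\mathrm{LieY}}+(-1)^{rq}[[R,P]_{\mathrm{LieY}},Q]_{\mathrm{LieY}}=0.
\]
Following the standard Gerstenhaber pattern, I would introduce the notion of \emph{non-associative product} $\circ$ and prove a pre-Lie--type identity: show that the associator $(P,Q,R):=(P\circ Q)\circ R-P\circ(Q\circ R)$ is symmetric in $Q,R$ up to the sign $(-1)^{qr}$. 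This follows by expanding both sides and matching summands according to where $R$ lands relative to where $Q$ has been inserted. The shuffle sums reorganize into two classes: those in which $R$ is inserted in a slot of $Q$ (contributing to $P\circ(Q\circ R)$) and those in which $R$ is inserted in a slot of $P$ disjoint from $Q$'s insertion point (contributing to the $Q,R$-symmetric piece). Once the pre-Lie identity is in place, the graded Jacobi relation is a formal consequence.

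For the Maurer--Cartan half I would simply compute $[\Pi,\Pi]_{\mathrm{LieY}}=2\,\Pi\circ\Pi$, since $\Pi$ has odd degree $1$. Writing $\Pi=(\pi,\omega)$ with $p=q=1$, the general formula collapses to a short, explicit expression. Its first component $(\Pi\circ\Pi)_1$ applied to $x\wedge y,\;z$ reproduces, up to sign, the sum $[[x,y],z]+[[y,z],x]+[[z,x],y]+[x,y,z]+[y,z,x]+[z,x,y]$ from (LY1), while its second component $(\Pi\circ\Pi)_2$ splits into three pieces corresponding exactly to (LY2), (LY3), and (LY4) — the $P_2\circ Q_2$ term produces $[[x,y],[z,w]]$-type contributions that, after collecting, yield the derivation identity of (LY3) and (LY4), and the two internal-insertion terms produce the cyclic sum with a passive $w$ that is (LY2). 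Hence vanishing of $[\Pi,\Pi]_{\mathrm{LieY}}$ is equivalent to (LY1)--(LY4).

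The main obstacle is the graded Jacobi identity: the $\circ$ product is not associative and the shuffle bookkeeping is intricate because $P_1$ acts by substituting into one of the two vectors of an internal wedge block while $P_2$ acts on the tail slot, so the associator expansion generates three types of terms (tail-into-tail, tail-into-wedge, wedge-into-wedge) that must be matched across the three cyclic rearrangements. I would organize the verification by introducing a ``position of insertion'' label on each nested composition and then establish a bijective cancellation between the terms in which $Q$ and $R$ are inserted into disjoint slots of $P$ (the pre-Lie symmetric part) and the remaining terms (which regroup as $P\circ(Q\circ R)$). The second part, the Maurer--Cartan computation, is then a direct and bounded calculation that I would not expand further here.
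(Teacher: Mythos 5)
The paper does not actually prove this theorem: it is imported from the cited reference [Z4] and stated without proof, so there is no in-paper argument to compare yours against line by line. Your outline for the first half, namely establishing a graded right-symmetric (pre-Lie) identity for $\circ$ and deducing the graded Jacobi identity formally, is the standard route for brackets of Nijenhuis--Richardson type and is sound as an outline, although all of the substance lives in the shuffle bookkeeping that you defer.

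The second half contains a genuine error. You claim that $(\Pi\circ\Pi)_1$ applied to $x\wedge y,\ z$ reproduces the cyclic sum of (LY1), that $(\Pi\circ\Pi)_2$ splits into pieces corresponding to (LY2), (LY3) and (LY4), and hence that $[\Pi,\Pi]_{\mathrm{LieY}}=0$ is \emph{equivalent} to (LY1)--(LY4). A degree count already rules this out: for $p=q=1$ the component $(\Pi\circ\Pi)_1$ is a map on $(\wedge^2L)\otimes(\wedge^2L)$, so it takes four vector arguments, not three, and unwinding the definition gives (up to an overall sign) $\omega(x,y,\pi(z,w))-\pi(\omega(x,y,z),w)-\pi(z,\omega(x,y,w))$, which is identity (LY3); likewise $(\Pi\circ\Pi)_2$ produces exactly (LY4). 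The identities (LY1) and (LY2) are never generated, because the composition $\circ$ only inserts $Q_1$ into the final slot of $P_2$ and inserts $Q_2$ into one entry of an internal wedge block, so no term of the shape $\pi(\pi(x,y),z)$ or $\omega(\pi(x,y)\wedge z,w)$ ever appears. This is precisely what the remark immediately following the theorem records: $\Pi$ is a Maurer--Cartan element if and only if the two derivation-type identities hold, and a Maurer--Cartan element need not define a Lie Yamaguti algebra, which is why the theorem is stated as a one-way implication. The implication you actually need survives trivially (a Lie Yamaguti structure satisfies (LY3) and (LY4), hence is Maurer--Cartan), but the term identification you give, and the equivalence you assert, are wrong and would need to be replaced by the computation above.
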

\begin{re}
	Let $\Pi=(\pi,\omega)\in\mathfrak{C}^1(L,L)$, then $(L,\pi,\omega)$ does not mean a Lie Yamaguti algebra. According to (\cite{Z4}, Theorem 3.1) $\Pi$ is a Maurer-Cartan element if and only if $\Pi$ satisfies the following equations 
	\begin{eqnarray*}
		\omega(x,y,\pi(z,w))&=&\pi(\omega(x,y,z),w)+\pi(z,\omega(x,y,w)),\\
		\omega(x,y,\omega(z,w,t))&=&\omega(\omega(x,y,z),w,t)+\omega(z,\omega(x,y,w),t)+\omega(z,w,\omega(x,y,t)).
	\end{eqnarray*}
\end{re}
Let $\Pi$ define a Lie Yamaguti algebra structure on the vector space $L$. It follows from the graded Jacobi identity that $\partial_\Pi:=[\Pi,\cdot]_\mathrm{LieY}$ is a differential on $(\mathfrak{C}^\star(L,L),[\cdot,\cdot]_\mathrm{LieY})$.
\begin{thm}\cite{Z4}
	Let $(L,\pi,\omega)$ be a Lie Yamaguti algebra. Then, the triple $(\mathfrak{C}^\star(L,L),[\cdot,\cdot]_\mathrm{LieY},\partial_\Pi)$ is a differential graded Lie algebra with $\partial_\Pi:=[\Pi,\cdot]_\mathrm{LieY}$.
\end{thm}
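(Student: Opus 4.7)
The plan is to deduce this entire statement from the previous theorem (Theorem \ref{thm MC of LYA}) by purely formal graded Lie algebra manipulations, so no new cohomological computation with $\pi$ and $\omega$ is required. The input data is that $(\mathfrak{C}^\star(L,L),[\cdot,\cdot]_\mathrm{LieY})$ is already known to be a graded Lie algebra, and that $\Pi=(\pi,\omega)\in\mathfrak{C}^1(L,L)$ is a Maurer-Cartan element, i.e.\ $[\Pi,\Pi]_\mathrm{LieY}=0$.

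First I would verify that $\partial_\Pi:=[\Pi,\cdot]_\mathrm{LieY}$ has the right homogeneity: since $\Pi$ has degree $1$ and the bracket is of degree $0$, the operator $\partial_\Pi$ maps $\mathfrak{C}^p(L,L)$ into $\mathfrak{C}^{p+1}(L,L)$, so it is a degree $+1$ endomorphism of $\mathfrak{C}^\star(L,L)$. Next I would check the graded Leibniz rule, namely
\begin{equation*}
\partial_\Pi[P,Q]_\mathrm{LieY}=[\partial_\Pi P,Q]_\mathrm{LieY}+(-1)^{|P|}[P,\partial_\Pi Q]_\mathrm{LieY},
\end{equation*}
which is exactly the graded Jacobi identity applied to the triple $(\Pi,P,Q)$ in the graded Lie algebra $(\mathfrak{C}^\star(L,L),[\cdot,\cdot]_\mathrm{LieY})$; this is immediate and requires no new computation.

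Then I would establish $\partial_\Pi\circ\partial_\Pi=0$. For any $P\in\mathfrak{C}^\star(L,L)$, the graded Jacobi identity gives
\begin{equation*}
[\Pi,[\Pi,P]_\mathrm{LieY}]_\mathrm{LieY}=[[\Pi,\Pi]_\mathrm{LieY},P]_\mathrm{LieY}-[\Pi,[\Pi,P]_\mathrm{LieY}]_\mathrm{LieY},
\end{equation*}
so that $2\,\partial_\Pi^2 P=[[\Pi,\Pi]_\mathrm{LieY},P]_\mathrm{LieY}$. Since Theorem \ref{thm MC of LYA} asserts $[\Pi,\Pi]_\mathrm{LieY}=0$ and we are in characteristic $0$, this forces $\partial_\Pi^2=0$.

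Combining the three points, $\partial_\Pi$ is a degree $+1$ square-zero derivation of the graded Lie bracket on $\mathfrak{C}^\star(L,L)$, which is exactly the definition of a differential graded Lie algebra. The only potential subtlety is being careful with the signs in the graded Jacobi identity for a degree $1$ element (so that the factor $2$ really appears and one may divide by it); apart from this bookkeeping the argument is formal and contains no obstacle specific to the Lie Yamaguti setting.
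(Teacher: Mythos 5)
Your argument is correct and matches the paper's (implicit) justification: the paper cites this result from \cite{Z4} and remarks only that it ``follows from the graded Jacobi identity,'' which is precisely the formal derivation you carry out, using $[\Pi,\Pi]_{\mathrm{LieY}}=0$ from Theorem \ref{thm MC of LYA} and characteristic $0$ to divide by $2$. No gap.
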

The notion of bidifferential graded Lie algebra and its Maurer-Cartan elements was introduced in \cite{L1}. Let us recall some facts that we need in the construction of Maurer-Cartan elements of compatible Lie Yamaguti algebra.
\begin{defn}\cite{L1}
	Let $(L,[\cdot,\cdot],\partial_1)$ and $(L,[\cdot,\cdot],\partial_2)$ be two differential graded Lie algebras. Then the quadruple $(L,[\cdot,\cdot],\partial_1,\partial_2)$ is called a bidifferential graded Lie algebra if $\partial_1$ and $\partial_2$ satisfies 
	\begin{equation*}
		\partial_1\circ\partial_2+\partial_2\circ\partial_1=0.
	\end{equation*}
\end{defn}
\begin{defn}\cite{L1}
	Let $(L,[\cdot,\cdot],\partial_1,\partial_2)$ be a bidifferential graded Lie algebra. A couple $(\pi_1,\pi_2)\in L_1\oplus L_1$ is called a Maurer-Cartan element of the bidifferntial graded Lie algebra $(L,[\cdot,\cdot],\partial_1,\partial_2)$ if $\pi_1$ and $\pi_2$ are Maurer-Cartan elements of differential graded Lie algebras $(L,[\cdot,\cdot],\partial_1)$ and $(L,[\cdot,\cdot],\partial_2)$ respectively and 
	\begin{equation*}
		\partial_1\pi_2+\partial_2\pi_1+[\pi_1,\pi_2]=0.
	\end{equation*}
\end{defn}
Now we are positioned to introduce the Maurer-Cartan characterization of compatible Lie Yamaguti algebras.
\begin{thm}\label{MC of compatible LYA}
	Let $L$ be a vector space and $\Pi_1,\Pi_2\in\mathfrak{C}^1(L,L)$. If $(L,\pi_1,\omega_1,\pi_2,\omega_2)$ is a compatible Lie Yamaguti algebra then $(\Pi_1,\Pi_2)$ is a Maurer-Cartan element of the bidifferential graded Lie algebra $(\mathfrak{C}^\star(L,L),[\cdot,\cdot]_\mathrm{LieY};\partial_1=0,\partial_2=0)$
\end{thm}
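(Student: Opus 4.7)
With $\partial_1 = \partial_2 = 0$, the definition of a Maurer-Cartan pair in a bidifferential graded Lie algebra reduces to three verifications: (i) each $\Pi_i$ satisfies $\tfrac{1}{2}[\Pi_i,\Pi_i]_{\mathrm{LieY}} = 0$, i.e.\ is itself a Maurer-Cartan element of the underlying graded Lie algebra; (ii) the bidifferential compatibility $\partial_1 \circ \partial_2 + \partial_2 \circ \partial_1 = 0$; and (iii) the cross-compatibility $\partial_1 \Pi_2 + \partial_2 \Pi_1 + [\Pi_1,\Pi_2]_{\mathrm{LieY}} = 0$. Items (i) and (ii) are essentially free: (i) follows immediately from Theorem \ref{thm MC of LYA} applied to each constituent Lie Yamaguti algebra $(L,\pi_i,\omega_i)$, and (ii) is automatic since both differentials vanish. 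The whole content of the theorem therefore lies in (iii), which, after killing the differential terms, amounts to the single identity $[\Pi_1,\Pi_2]_{\mathrm{LieY}} = 0$.

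My plan for this cross-equation is to leverage Proposition \ref{prop3.1}. Specializing to $k_1 = k_2 = 1$, the compatibility axioms (CY1)--(CY4) imply that $(L,\pi_1+\pi_2,\omega_1+\omega_2)$ is itself a Lie Yamaguti algebra. A second application of Theorem \ref{thm MC of LYA} then gives that $\Pi_1+\Pi_2 \in \mathfrak{C}^1(L,L)$ is a Maurer-Cartan element of $(\mathfrak{C}^\star(L,L),[\cdot,\cdot]_{\mathrm{LieY}})$. Using biadditivity of the graded bracket, together with graded symmetry on degree-one elements (which yields $[\Pi_1,\Pi_2]_{\mathrm{LieY}} = [\Pi_2,\Pi_1]_{\mathrm{LieY}}$), I expand
\begin{equation*}
  0 \;=\; [\Pi_1+\Pi_2,\,\Pi_1+\Pi_2]_{\mathrm{LieY}} \;=\; [\Pi_1,\Pi_1]_{\mathrm{LieY}} + 2\,[\Pi_1,\Pi_2]_{\mathrm{LieY}} + [\Pi_2,\Pi_2]_{\mathrm{LieY}}.
\end{equation*}
The outer terms vanish by (i), and since the base field has characteristic zero we conclude $[\Pi_1,\Pi_2]_{\mathrm{LieY}} = 0$, completing (iii).

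I anticipate no real obstacle with this approach; it simply chains Theorem \ref{thm MC of LYA} and Proposition \ref{prop3.1} linearly. For reference, the alternative direct route is to expand $\Pi_1 \circ \Pi_2$ and $\Pi_2 \circ \Pi_1$ via the explicit circle-product formula with $p = q = 1$, and match the first component of $[\Pi_1,\Pi_2]_{\mathrm{LieY}}$ against (CY4) and the second component against (CY3). As the remark following Theorem \ref{thm MC of LYA} records, only the axioms (LY3)--(LY4) are recovered term-by-term from a Maurer-Cartan condition; the analogous axioms (CY1)--(CY2) enter the argument indirectly, via Proposition \ref{prop3.1}, where they are precisely what is needed to ensure that the sum $(L,\pi_1+\pi_2,\omega_1+\omega_2)$ satisfies (LY1) and (LY2).
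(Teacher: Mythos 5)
Your proof is correct, but it takes a genuinely different route from the paper. The paper verifies the cross-condition $[\Pi_1,\Pi_2]_{\mathrm{LieY}}=0$ by brute force: it unwinds the circle product with $p=q=1$, writes out the two components $[\Pi_1,\Pi_2]_1(x,y,z,w)$ and $[\Pi_1,\Pi_2]_2(x,y,z,w,t)$ explicitly, and observes that the resulting twelve-term and sixteen-term expressions are exactly the compatibility identities from Definition \ref{defn3.1} (the ones it cites as governing the mixed $[\cdot,\cdot,\cdot]_i$-on-$[\cdot,\cdot]_j$ and $[\cdot,\cdot,\cdot]_i$-on-$[\cdot,\cdot,\cdot]_j$ interactions, i.e.\ the (CY4)- and (CY3)-type axioms). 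You instead polarize: Proposition \ref{prop3.1} at $k_1=k_2=1$ makes $\Pi_1+\Pi_2$ a Lie Yamaguti structure, Theorem \ref{thm MC of LYA} gives $[\Pi_1+\Pi_2,\Pi_1+\Pi_2]_{\mathrm{LieY}}=0$, and bilinearity plus the symmetry $[P,Q]_{\mathrm{LieY}}=P\circ Q+Q\circ P=[Q,P]_{\mathrm{LieY}}$ in degree one, together with characteristic zero, yields $2[\Pi_1,\Pi_2]_{\mathrm{LieY}}=0$. Your argument is shorter and avoids the combinatorics of the circle product, but it outsources the actual content to Proposition \ref{prop3.1}, which the paper states without proof; the paper's computation has the advantage of exhibiting precisely which compatibility axioms the Maurer-Cartan equation encodes, which matters later for the deformation-theoretic interpretation. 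Your closing observation --- that the Maurer-Cartan condition only sees the (LY3)/(LY4)-type axioms, so that (CY1)--(CY2) are not actually detected by $[\Pi_1,\Pi_2]_{\mathrm{LieY}}=0$ and enter only through ensuring the sum is a genuine Lie Yamaguti algebra --- is accurate and worth keeping, since it flags that the converse of the theorem would fail without extra hypotheses.
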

\begin{proof}
	By theorem \eqref{thm MC of LYA} $\Pi_1,\Pi_2 \in \mathrm{C}^1(L,L)$ define Lie Yamaguti algebra structure on $L$ if 
	\begin{equation*}
		[\Pi_1,\Pi_1]_\mathrm{LieY}=0,\quad [\Pi_2,\Pi_2]_\mathrm{LieY}=0.
	\end{equation*}
Moreover, $[\Pi_1,\Pi_2]_\mathrm{LieY}=([\Pi_1,\Pi_2]_1,[\Pi_1,\Pi_2]_2)$, where \\
$[\Pi_1,\Pi_2]_1=(\Pi_1\circ\Pi_2)_1+(\Pi_2\circ\Pi_1)_1$ which means by direct computation
\begin{align*}
	[\Pi_1,\Pi_2]_1(x,y,z,w)&=(\Pi_1\circ\Pi_2)_1(x,y,z,w)+(\Pi_2\circ\Pi_1)_1(x,y,z,w)\\
	&=\omega_1(x,y,\pi_2(z,w))-\pi_1(z,\omega_2(x,y,w))-\pi_1(\omega_2(x,y,z),w)\\
	&-\omega_2(x,y,\pi_1(z,w))+\pi_2(z,\omega_1(x,y,w))+\pi_2(\omega_1(x,y,z),w)\\
	&\overset{\textbf{(CLY2)}}{=}0.
\end{align*}
And 
\begin{align*}
	[\Pi_1,\Pi_2]_2(x,y,z,w,t)&=(\Pi_1\circ\Pi_2)_2(x,y,z,w,t)+(\Pi_2\circ\Pi_1)_2(x,y,z,w,t)\\
	&=\omega_2(x,y,\omega_1(z,w,t))-\omega_2(\omega_1(x,y,z),w,t)-\omega_2(z,\omega_1(x,y,w),t)-\omega_2(z,w,\omega_1(x,y,t))\\
	&+\omega_1(x,y,\omega_2(z,w,t))-\omega_1(\omega_2(x,y,z),w,t)-\omega_1(z,\omega_2(x,y,w),t)-\omega_1(z,w,\omega_2(x,y,t))\\ &\overset{\textbf{(CLY3)}}{=}0.
\end{align*}
This completes the proof.
\end{proof}
\begin{re}\label{remark0}
Recall that from \cite{L1}, if $(\pi_1,\pi_2)$ is a Maurer-Cartan element of a bidifferential graded Lie algebra $(L,[\cdot,\cdot], \partial_1, \partial_2)$. Then $(L,[\cdot,\cdot],\widetilde{\partial_1},\widetilde{\partial_2})$ is a bidifferential graded Lie algebra with  \begin{equation*}\widetilde{\partial_1}P:=\partial_1P+[\pi_1,P],\quad \widetilde{\partial_2}P:=\partial_2P+[\pi_2,P],\quad \forall~ P\in L.\end{equation*}
As a conclusion, let $\widetilde{\pi_1},\widetilde{\pi_2}\in L_1$ and $(\pi_1+\widetilde{\pi_1},\pi_2+\widetilde{\pi_2})$ is a Maurer-Cartan element of the bidifferential graded Lie algebra $(L,[\cdot,\cdot],\partial_1,\partial_2)$ if and only if $(\widetilde{\pi_1},\widetilde{\pi_2})$ is a Maurer-Cartan element of the bidifferential graded Lie algebra $(L,[\cdot,\cdot],\widetilde{\partial_1},\widetilde{\partial_2})$.
\end{re}
Now we are ready to give the bidifferential graded Lie algebra that controls deformations of a compatible Lie Yamaguti algebra.
\begin{thm}
	Let $(L,\pi_1,\omega_1,\pi_2,\omega_2)$ be a compatible Lie Yamaguti algebra. Then the following two conclusions hold
	\begin{enumerate}
		\item[1)] $(\mathfrak{C}^\star(L,L),[\cdot,\cdot]_\mathrm{LieY},\widetilde{\partial_1},\widetilde{\partial_2})$ is a bidifferential graded Lie algebra with 
		\begin{equation*}
	\widetilde{\partial_1}=[\pi_1,P]_\mathrm{LieY},\quad \widetilde{\partial_2}=[\pi_2,P]_\mathrm{LieY}.
		\end{equation*}
	\item [2)] For any $\widetilde{\Pi_1},\widetilde{\Pi_1}\in\mathfrak{C}^1(L,L)$, $(L,\pi_1+\widetilde{\pi_1},\omega_1+\widetilde{\omega_1},\pi_2+\widetilde{\pi_2},\omega_2+\widetilde{\omega_2})$ be compatible Lie Yamaguti algebra then $(\widetilde{\Pi_1},\widetilde{\Pi_2})$ is a Maurer-Cartan element of $(\mathfrak{C}^\star(L,L),[\cdot,\cdot]_\mathrm{LieY},\widetilde{\partial_1},\widetilde{\partial_2})$.\end{enumerate}
\end{thm}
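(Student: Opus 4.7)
The plan is to reduce both parts to machinery already established in the excerpt: Theorem \ref{thm MC of LYA}, Theorem \ref{MC of compatible LYA}, and the twisting principle for bidifferential graded Lie algebras recorded in Remark \ref{remark0}. No direct manipulation of the shuffle sums defining $\circ$ will be needed.

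For Part 1, I would argue as follows. Since $(L,\pi_i,\omega_i)$ is a Lie Yamaguti algebra for $i=1,2$, Theorem \ref{thm MC of LYA} says that $\Pi_i=(\pi_i,\omega_i)$ is a Maurer-Cartan element of the graded Lie algebra $(\mathfrak{C}^\star(L,L),[\cdot,\cdot]_\mathrm{LieY})$, i.e. $[\Pi_i,\Pi_i]_\mathrm{LieY}=0$. The graded Jacobi identity then gives
\begin{equation*}
\widetilde{\partial_i}^{\,2}P \;=\; [\Pi_i,[\Pi_i,P]_\mathrm{LieY}]_\mathrm{LieY} \;=\; \tfrac{1}{2}[[\Pi_i,\Pi_i]_\mathrm{LieY},P]_\mathrm{LieY} \;=\; 0,
\end{equation*}
so each $\widetilde{\partial_i}$ is a differential on $(\mathfrak{C}^\star(L,L),[\cdot,\cdot]_\mathrm{LieY})$. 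It remains to verify the compatibility $\widetilde{\partial_1}\widetilde{\partial_2}+\widetilde{\partial_2}\widetilde{\partial_1}=0$. Applying the graded Jacobi identity once more, for every $P\in\mathfrak{C}^\star(L,L)$,
\begin{equation*}
[\Pi_1,[\Pi_2,P]_\mathrm{LieY}]_\mathrm{LieY}+[\Pi_2,[\Pi_1,P]_\mathrm{LieY}]_\mathrm{LieY}\;=\;[[\Pi_1,\Pi_2]_\mathrm{LieY},P]_\mathrm{LieY},
\end{equation*}
and Theorem \ref{MC of compatible LYA} (whose proof already translated \textbf{(CLY2)} and \textbf{(CLY3)} into the vanishing of the two components of $[\Pi_1,\Pi_2]_\mathrm{LieY}$) yields $[\Pi_1,\Pi_2]_\mathrm{LieY}=0$. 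Hence the right-hand side is zero and Part 1 is complete.

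For Part 2, I would simply apply Theorem \ref{MC of compatible LYA} to the perturbed structure: $(L,\pi_1+\widetilde{\pi_1},\omega_1+\widetilde{\omega_1},\pi_2+\widetilde{\pi_2},\omega_2+\widetilde{\omega_2})$ is a compatible Lie Yamaguti algebra if and only if $(\Pi_1+\widetilde{\Pi_1},\Pi_2+\widetilde{\Pi_2})$ is a Maurer-Cartan element of the untwisted bidifferential graded Lie algebra $(\mathfrak{C}^\star(L,L),[\cdot,\cdot]_\mathrm{LieY};\partial_1=0,\partial_2=0)$. Since $(\Pi_1,\Pi_2)$ is itself a Maurer-Cartan element of that same bidifferential graded Lie algebra, Remark \ref{remark0} identifies this condition with $(\widetilde{\Pi_1},\widetilde{\Pi_2})$ being a Maurer-Cartan element of the twisted object $(\mathfrak{C}^\star(L,L),[\cdot,\cdot]_\mathrm{LieY},\widetilde{\partial_1},\widetilde{\partial_2})$ constructed in Part 1, as desired.

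The only potential pitfall is conceptual rather than computational: one must be careful that the ``differential'' part of the ambient bidifferential graded Lie algebra is taken to be $(\partial_1,\partial_2)=(0,0)$, so that the Maurer-Cartan equation for $(\Pi_1,\Pi_2)$ reduces exactly to the three conditions $[\Pi_1,\Pi_1]_\mathrm{LieY}=[\Pi_2,\Pi_2]_\mathrm{LieY}=[\Pi_1,\Pi_2]_\mathrm{LieY}=0$, and that the twisting in Remark \ref{remark0} really produces the differentials $\widetilde{\partial_i}=[\Pi_i,\cdot]_\mathrm{LieY}$ appearing in the statement. Once this identification is made, no expansion of the operation $\circ$ in terms of $\pi_i,\omega_i$ is required, since all relevant brackets have already been handled in the proofs of Theorem \ref{thm MC of LYA} and Theorem \ref{MC of compatible LYA}.
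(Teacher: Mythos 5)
Your proposal is correct and follows essentially the same route as the paper, which simply cites Theorem \ref{thm MC of LYA}, Theorem \ref{MC of compatible LYA} and Remark \ref{remark0} (deferring to \cite{L1}, Theorem 3.12) without writing out the graded Jacobi computations. You have merely supplied the details the paper leaves implicit, and they are accurate.
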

\begin{proof}
	The proof results from \eqref{thm MC of LYA}
and the previous remark \eqref{remark0}. For more details see (\cite{L1}, theorem 3.12). 
\end{proof}
\subsection*{Cohomology of compatible Lie Yamaguti algebras}
In this subsection we introduce the cohomology of compatible Lie Yamaguti algebra. \\
Recall that the cohomology theory of Lie Yamaguti algebra was introduced in \cite{K3}. Let $(L,[\cdot,\cdot],[\cdot,\cdot,\cdot])$ be a Lie Yamaguti algebra and $(V;\rho,\mu)$ a representation of it. Denote by $C_\mathrm{LieY}^p(L,V)$ for $p\geq1$ the set of $p$-cochains such that 
	$$
C_\mathrm{LieY}^p(L,V):= \left\{
\begin{array}{ll}
	\mathrm{Hom}((\wedge^2L)\otimes\cdots(\wedge^2L),V)\times \mathrm{Hom}((\wedge^2L)\otimes\cdots(\wedge^2L)\otimes L,V) & \text{ for } n\geq1,\\
	\mathrm{Hom}(L,V)& \text{ for } n=0.	
\end{array}
\right.
$$ 
For $p\geq1$, the coboundary operator $\delta:C_\mathrm{LieY}^p(L,V)\rightarrow C_\mathrm{LieY}^{p+1}(L,V)$ is defined as follows :
\begin{enumerate}
	\item [$\bullet$] If $n\geq1$, for any $F=(f,g)\in C_\mathrm{LieY}^{n+1}(L,V)$, the coboundary map 
	\begin{eqnarray*}
		\delta=(\delta_1,\delta_2):C_\mathrm{LieY}^{n+1}(L,V)&\rightarrow& C_\mathrm{LieY}^{n+2}(L,V)\\
		F&\mapsto&(\delta_1F,\delta_2F)
	\end{eqnarray*}
is given by 
\begin{align*}
	&(\delta_1F)(X_1,\cdots,X_{n+1})=\\
	&=(-1)^n\Big(\rho(x_{n+1})g(X_1,\cdots,X_n,y_{n+1})-\rho(y_{n+1})g(X_1,\cdots,X_n,x_{n+1})-g(X_1,\cdots,X_n,[x_{n+1},y_{n+1}]) \Big)\\
	&+\displaystyle\sum_{k=1}^n(-1)^{(k+1)}D(X_k)f(X_1,\cdots,\hat{X_k},\cdots,X_{n+1})+\displaystyle\sum_{1\leq k<l\leq n+1}(-1)^{k}f(X_1,\cdots,\hat{X_k},\cdots,X_k\circ X_l,\cdots,X_{n+1}),
\end{align*}
and 
\begin{align*}
	&(\delta_2F)(X_1,\cdots,X_{n+1},z)=\\
	&(-1)^n\Big(\mu(y_{n+1},z)g(X_1,\cdots,X_n,x_{n+1})-\mu(x_{n+1},z)g(X_1,\cdots,X_n,y_{n+1}) \Big)\\
	&+\displaystyle\sum_{k=1}^{n+1}(-1)^{(k+1)}D(X_k)g(X_1,\cdots,\hat{X_k},\cdots,X_{n+1},z)+\displaystyle\sum_{1\leq k<l\leq n+1}(-1)^{k}g(X_1,\cdots,\hat{X_k},\cdots,X_k\circ X_l,\cdots,X_{n+1},z)\\
	&+\displaystyle\sum_{k=1}^{n+1}(-1)^kg(X_1,\cdots,\hat{X_k},\cdots,X_{n+1},[x_k,y_k,z]).
\end{align*}
Where $X_i=x_i\wedge y_i\in\wedge^2L$, $(i=1,\cdots,n+1)$, $z\in L$ and $X_k\circ X_l:=[x_k,y_k,x_l]\wedge y_l+x_l\wedge[x_k,y_k,y_l]$.
\item [$\bullet$] For the case that $n=0$, any element $f\in C_\mathrm{LieY}^1(L,V)$ given the coboundary map 
\begin{eqnarray*}
	\delta:C_\mathrm{LieY}^1(L,V)&\rightarrow& C_\mathrm{LieY}^2(L,V)\\
	f&\mapsto&(\delta_1f,\delta_2f)
\end{eqnarray*}
is given by 
\begin{align*}
	(\delta_1f)(x,y)&=\rho(x)f(y)-\rho(y)f(x)-f([x,y]),\\
	(\delta_2f)(x,y,z)&=D(x,y)f(z)+\mu(y,z)f(x)-\mu(x,z)f(y)-f([x,y,z]),\quad \forall x,y,z\in L.
\end{align*}
\end{enumerate}
Yamaguti showed that $\delta$ is a differential, which means that $\delta\circ\delta=0$. For more details, any $f\in C_\mathrm{LieY}^1(L,V)$ we have 
\begin{equation*}
	\delta_1(\delta_1f,\delta_2f)=0 \text{ and } \delta_2(\delta_1f,\delta_2f)=0.
\end{equation*} 
Moreover, for all $F\in C_\mathrm{LieY}^p(L,V)$ $(p\geq2)$, we have 
\begin{equation*}
	\delta_1(\delta_1F,\delta_2F)=0 \text{ and } \delta_2(\delta_1F,\delta_2F)=0.
\end{equation*}
Then the cochain complex $\Big(C_\mathrm{LieY}^\star(L,V)=\oplus_{p\geq1}C_\mathrm{LieY}^p(L,V),\delta\Big)$ is well defined.\\
According to the Maurer-Cartan characterization of Lie Yamaguti algebras, we obtain the following theorem
\begin{thm}\cite{L1}
	Let $(L,\pi,\omega)$ be a Lie Yamaguti algebra and $\delta:C_\mathrm{LieY}^n(L,L)\rightarrow C_\mathrm{LieY}^{n+1}(L,L)$ be the coboundary map associated tp the adjoint representation. Then we have 
	\begin{eqnarray*}
		(\delta F)&=&(-1)^n\partial_\Pi(F)=(-1)^n[\Pi,F]_\mathrm{LieY},\quad \forall F\in \mathfrak{C}^n(L,L) (n\geq1),\\
		\delta(f)&=&\partial_\Pi(f)=[\Pi,f]_\mathrm{LieY},\quad  \forall f\in\mathfrak{C}^0(L,L)=\mathrm{Hom}(L,L).
	\end{eqnarray*}
\end{thm}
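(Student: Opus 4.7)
The plan is to reduce both statements to Theorems \ref{thm MC of LYA} and \ref{MC of compatible LYA} together with Remark \ref{remark0}, so that essentially no new case analysis of the axioms \textbf{(CLY1)}--\textbf{(CLY4)} is required beyond what the paper has already established.

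For part 1), I would first observe that $(L,\pi_i,\omega_i)$ being a Lie Yamaguti algebra forces $[\Pi_i,\Pi_i]_\mathrm{LieY}=0$ by Theorem \ref{thm MC of LYA}. Graded Jacobi on the degree $1$ element $\Pi_i$ then yields two consequences. First, $\widetilde{\partial_i}=[\Pi_i,-]_\mathrm{LieY}$ is a degree $+1$ graded derivation of $[\cdot,\cdot]_\mathrm{LieY}$; second,
\[\widetilde{\partial_i}^2(P)=[\Pi_i,[\Pi_i,P]_\mathrm{LieY}]_\mathrm{LieY}=\tfrac12[[\Pi_i,\Pi_i]_\mathrm{LieY},P]_\mathrm{LieY}=0.\]
For the anticommutation relation $\widetilde{\partial_1}\widetilde{\partial_2}+\widetilde{\partial_2}\widetilde{\partial_1}=0$, graded Jacobi transforms it into $[[\Pi_1,\Pi_2]_\mathrm{LieY},-]_\mathrm{LieY}=0$; the explicit computation already carried out in the proof of Theorem \ref{MC of compatible LYA} (which used \textbf{(CLY2)} and \textbf{(CLY3)}) delivers $[\Pi_1,\Pi_2]_\mathrm{LieY}=0$, and both requirements of a bidifferential graded Lie algebra are met.

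For part 2), I would translate the hypothesis via Theorem \ref{MC of compatible LYA}: the quintuple $(L,\pi_1+\widetilde{\pi_1},\omega_1+\widetilde{\omega_1},\pi_2+\widetilde{\pi_2},\omega_2+\widetilde{\omega_2})$ being a compatible Lie Yamaguti algebra is equivalent to $(\Pi_1+\widetilde{\Pi_1},\Pi_2+\widetilde{\Pi_2})$ being a Maurer-Cartan element of the untwisted bidifferential graded Lie algebra $(\mathfrak{C}^\star(L,L),[\cdot,\cdot]_\mathrm{LieY},0,0)$. Since $(\Pi_1,\Pi_2)$ is itself a Maurer-Cartan element of that structure, by Theorem \ref{MC of compatible LYA} applied to the original compatible Lie Yamaguti algebra, Remark \ref{remark0} immediately gives that the shifted pair $(\widetilde{\Pi_1},\widetilde{\Pi_2})$ is a Maurer-Cartan element of the twisted bidifferential graded Lie algebra produced in part 1). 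Concretely, bilinearly expanding the three vanishing brackets $[\Pi_i+\widetilde{\Pi_i},\Pi_j+\widetilde{\Pi_j}]_\mathrm{LieY}=0$ and cancelling the pure $\Pi$-terms leaves exactly the three Maurer-Cartan equations $\widetilde{\partial_i}\widetilde{\Pi_i}+\tfrac12[\widetilde{\Pi_i},\widetilde{\Pi_i}]_\mathrm{LieY}=0$ for $i=1,2$ together with $\widetilde{\partial_1}\widetilde{\Pi_2}+\widetilde{\partial_2}\widetilde{\Pi_1}+[\widetilde{\Pi_1},\widetilde{\Pi_2}]_\mathrm{LieY}=0$.

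The main obstacle is essentially organizational: one must carefully track how the three bilinear bracket expansions interact with the twisting $\widetilde{\partial_i}=[\Pi_i,-]_\mathrm{LieY}$. All Lie Yamaguti-specific computations, namely the identification of $[\Pi_i,\Pi_i]_\mathrm{LieY}=0$ with the axioms (LY1)--(LY4) and of $[\Pi_1,\Pi_2]_\mathrm{LieY}=0$ with the compatibility axioms \textbf{(CLY2)}--\textbf{(CLY3)}, have already been performed in Theorems \ref{thm MC of LYA} and \ref{MC of compatible LYA}; the remaining content of the argument is purely the formal twisting mechanism of Remark \ref{remark0} applied inside the specific graded Lie algebra $(\mathfrak{C}^\star(L,L),[\cdot,\cdot]_\mathrm{LieY})$.
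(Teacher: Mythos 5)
Your proposal does not address the stated theorem. The statement you were asked to prove concerns a single Lie Yamaguti algebra $(L,\pi,\omega)$ and asserts that the Yamaguti coboundary operator $\delta=(\delta_1,\delta_2)$ associated to the adjoint representation coincides, up to the sign $(-1)^n$, with the inner derivation $\partial_\Pi=[\Pi,\cdot]_{\mathrm{LieY}}$ of the graded Lie algebra $(\mathfrak{C}^\star(L,L),[\cdot,\cdot]_{\mathrm{LieY}})$. What you have written instead is a proof sketch of the later, different theorem on compatible Lie Yamaguti algebras, namely that $(\mathfrak{C}^\star(L,L),[\cdot,\cdot]_{\mathrm{LieY}},\widetilde{\partial_1},\widetilde{\partial_2})$ is a bidifferential graded Lie algebra and that $(\widetilde{\Pi_1},\widetilde{\Pi_2})$ is a Maurer-Cartan element of it. Nothing in your argument touches the coboundary formulas for $(\delta_1F)(X_1,\dots,X_{n+1})$ and $(\delta_2F)(X_1,\dots,X_{n+1},z)$, the sign $(-1)^n$, or the degree-zero case $f\in\mathrm{Hom}(L,L)$, and these are precisely the content of the statement.

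A proof of the actual statement must expand $[\Pi,F]_{\mathrm{LieY}}=\Pi\circ F-(-1)^{n}F\circ\Pi$ using the explicit shuffle-sum definitions of $(P\circ Q)_1$ and $(P\circ Q)_2$, specialize the representation data to the adjoint case $\rho(x)=[x,\cdot]$, $\mu(x,y)=[\cdot,x,y]$ and $D(x,y)=[x,y,\cdot]$, and then match the resulting terms one by one against the terms $\rho(x_{n+1})g(\dots)$, $\mu(y_{n+1},z)g(\dots)$, $D(X_k)f(\dots)$, $f(\dots,X_k\circ X_l,\dots)$ and $g(\dots,[x_k,y_k,z])$ appearing in $\delta_1F$ and $\delta_2F$, tracking the Koszul signs from the shuffle sums so as to produce the overall factor $(-1)^n$; the case $n=0$ must then be handled separately, since $\mathfrak{C}^0(L,L)=\mathrm{Hom}(L,L)$ is not covered by the composition formulas, which are only defined for $p,q\geq1$. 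None of this bookkeeping appears in your proposal, so as an attempt at the stated theorem it is missing in its entirety; the material you did write belongs to the proof of the bidifferential-graded-Lie-algebra theorem, where the paper likewise reduces everything to the Maurer-Cartan characterization and the twisting remark.
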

Next we introduce the cohomology theory of compatible Lie Yamaguti algebras. Let $(L,[\cdot,\cdot]_1,[\cdot,\cdot,\cdot]_1,[\cdot,\cdot]_2,[\cdot,\cdot,\cdot]_2)$ be a compatible Lie Yamaguti algebra with 
\begin{equation*}
	\pi_1(x,y)=[x,y]_1,\quad \omega_1(x,y,z)=[x,y,z]_1,\quad \pi_2(x,y)=[x,y]_2,\quad \omega_2(x,y,z)=[x,y,z]_2,\quad \forall x,y,z\in L. 
\end{equation*} 
By theorem \eqref{MC of compatible LYA} $(\Pi_1=(\pi_1,\omega_1),\Pi_2=(\pi_2,\omega_2))$ is a Maurer-Cartan element of the bidifferential graded Lie algebra
$(\mathfrak{C}^\star(L,L),[\cdot,\cdot]_\mathrm{LieY};\partial_1=0,\partial_2=0)$.
Define the space of $0$-cochains to be $\mathfrak{C}_c^0=\mathrm{Hom}(L,L)$ and the space of $n$-cochains to be 
\begin{equation*}
	\mathfrak{C}^n_c(L,L):=\underbrace{\mathfrak{C}^n(L,L)\oplus\cdots\oplus\mathfrak{C}^n(L,L)}_{n-copies}
\end{equation*}
Define
\begin{eqnarray*}
	\delta_c:\mathfrak{C}_c^0(L,L)&\rightarrow &\mathfrak{C}_c^1(L,L)\\
	f&\mapsto& ([\Pi_1,f]_\mathrm{LieY},[\Pi_2,f]_\mathrm{LieY})=(\partial_{\Pi_1}(f),\partial_{\Pi_2}(f)),
\end{eqnarray*} 
and 
\begin{eqnarray*}
	\delta_c:\mathfrak{C}_c^n(L,L)&\rightarrow &\delta_c:\mathfrak{C}_c^{n+1}(L,L)\\
	(F_1,\cdots,F_n)&\mapsto& ([\Pi_1,F_1]_\mathrm{LieY},\cdots,\underbrace{[\Pi_2,F_{i-1}]_\mathrm{LieY}+[\Pi_1,F_i]_\mathrm{LieY}}_{\text{i-th place}},\cdots,[\Pi_2,F_n]_\mathrm{LieY}),
\end{eqnarray*} 
we have also 
\begin{equation*}
	\delta_c(F_1,\cdots,F_n):=(\partial_{\Pi_1}(F_1),\cdots,\partial_{\Pi_2}(F_{i-1})+\partial_{\Pi_1}(F_i),\cdots,\partial_{\Pi_2}(F_n))
\end{equation*}
where $(F_1,\cdots,F_n)\in\mathfrak{C}^n(L,L),\quad 2\leq i\leq n$.
\begin{thm}
	With the above notations, we have $\delta_c\circ\delta_c=0$, which means that $\Big(\mathfrak{C}_c^\star(L,L)=\oplus_{n\geq0}\mathfrak{C}_c^n(L,L),\delta_c^\star \Big)$ is a cochain complex.
\end{thm}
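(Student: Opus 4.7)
The plan is to reduce $\delta_c \circ \delta_c = 0$ to three algebraic identities for the operators $\partial_{\Pi_1} := [\Pi_1, \cdot]_\mathrm{LieY}$ and $\partial_{\Pi_2} := [\Pi_2, \cdot]_\mathrm{LieY}$ on $\mathfrak{C}^\star(L,L)$, namely
\begin{equation*}
\partial_{\Pi_1}^2 = 0, \qquad \partial_{\Pi_2}^2 = 0, \qquad \partial_{\Pi_1} \partial_{\Pi_2} + \partial_{\Pi_2} \partial_{\Pi_1} = 0.
\end{equation*}
Geometrically, $\delta_c$ is the total differential of a bicomplex-style structure on $\mathfrak{C}_c^\star(L,L)$ with horizontal piece $\partial_{\Pi_1}$ and vertical piece $\partial_{\Pi_2}$, and these three identities are precisely what is needed for the total differential to square to zero.

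First I would establish the three identities. By Theorem \eqref{thm MC of LYA}, each of $\Pi_1$ and $\Pi_2$ is a Maurer-Cartan element of $(\mathfrak{C}^\star(L,L), [\cdot,\cdot]_\mathrm{LieY})$ because each $(L, \pi_j, \omega_j)$ is a Lie Yamaguti algebra, so $[\Pi_j, \Pi_j]_\mathrm{LieY} = 0$ for $j=1,2$. Combined with the graded Jacobi identity this gives $2\,\partial_{\Pi_j}^2(F) = [[\Pi_j, \Pi_j]_\mathrm{LieY}, F]_\mathrm{LieY} = 0$. For the mixed identity, Theorem \eqref{MC of compatible LYA} yields $[\Pi_1, \Pi_2]_\mathrm{LieY} = 0$ (this is exactly where the compatibility conditions (CLY2) and (CLY3) of Definition \eqref{defn3.1} enter); a further application of graded Jacobi then gives $(\partial_{\Pi_1}\partial_{\Pi_2} + \partial_{\Pi_2}\partial_{\Pi_1})(F) = [[\Pi_1, \Pi_2]_\mathrm{LieY}, F]_\mathrm{LieY} = 0$.

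The remaining verification is by direct bookkeeping. For $(F_1, \dots, F_n) \in \mathfrak{C}_c^n(L,L)$ with $n \geq 1$, write $\delta_c(F_1, \dots, F_n) = (G_1, \dots, G_{n+1})$ with $G_1 = \partial_{\Pi_1}(F_1)$, $G_{n+1} = \partial_{\Pi_2}(F_n)$, and $G_i = \partial_{\Pi_2}(F_{i-1}) + \partial_{\Pi_1}(F_i)$ for $2 \leq i \leq n$. Expanding $\delta_c^2(F_1, \dots, F_n) = (H_1, \dots, H_{n+2})$ one finds $H_1 = \partial_{\Pi_1}^2(F_1) = 0$ and $H_{n+2} = \partial_{\Pi_2}^2(F_n) = 0$, while every interior component telescopes to
\begin{equation*}
H_i \;=\; \partial_{\Pi_2}^2(F_{i-2}) \;+\; \bigl(\partial_{\Pi_1}\partial_{\Pi_2} + \partial_{\Pi_2}\partial_{\Pi_1}\bigr)(F_{i-1}) \;+\; \partial_{\Pi_1}^2(F_i),
\end{equation*}
all three summands of which vanish by the identities established above (with the convention that $F_0$ and $F_{n+1}$ are absent at the endpoints). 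The $n=0$ case $\delta_c^2(f) = (\partial_{\Pi_1}^2(f), (\partial_{\Pi_1}\partial_{\Pi_2}+\partial_{\Pi_2}\partial_{\Pi_1})(f), \partial_{\Pi_2}^2(f)) = 0$ is an immediate special case.

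The main obstacle is concentrated in the second step: correctly identifying $[\Pi_1,\Pi_2]_\mathrm{LieY}$ through the intricate expansion of $[\cdot,\cdot]_\mathrm{LieY}$ on $\mathfrak{C}^1(L,L)$ and matching the resulting terms with conditions (CLY2) and (CLY3). Since that matching was already performed inside Theorem \eqref{MC of compatible LYA}, the final step in my proof is essentially the standard total-complex computation and requires only careful index tracking at the boundary slots of each $(n+2)$-tuple.
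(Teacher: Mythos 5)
Your proposal is correct and follows essentially the same route as the paper: both expand $\delta_c^2$ slot by slot and observe that each component is a sum of terms of the form $\partial_{\Pi_1}^2$, $\partial_{\Pi_2}^2$, and $\partial_{\Pi_1}\partial_{\Pi_2}+\partial_{\Pi_2}\partial_{\Pi_1}$, all of which vanish. The only difference is cosmetic: you justify those three operator identities explicitly via the graded Jacobi identity and the Maurer--Cartan equations $[\Pi_1,\Pi_1]_\mathrm{LieY}=[\Pi_2,\Pi_2]_\mathrm{LieY}=[\Pi_1,\Pi_2]_\mathrm{LieY}=0$, whereas the paper leaves them implicit in its preceding theorem on the bidifferential graded Lie algebra structure.
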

\begin{proof}
	For any map $f\in\mathfrak{C}_C^0(L,L)$, we have 
	\begin{equation*}
		\delta_c(\delta_c(f))=\delta_c(\partial_{\Pi_1}(f),\partial_{\Pi_2}(f))=(\partial_{\Pi_1}(\partial_{\Pi_1}(f)),\underbrace{\partial_{\Pi_1}(\partial_{\Pi_2}(f))+\partial_{\Pi_2}(\partial_{\Pi_1}(f))}_{0},\partial_{\Pi_2}(\partial_{\Pi_2}(f)))=0
	\end{equation*}
Moreover, for any $(F_1,\cdots,F_n)\in \mathfrak{C}_c^n(L,L)$, $n\geq1$ we get 
\begin{align*}
	\delta_c(\delta_c(F_1,\cdots,F_n))&=\delta_c\Big(\partial_{\Pi_1}(F_1),\cdots,\partial_{\Pi_2}(F_{i-1})+\partial_{\Pi_1}(F_i),\cdots,\partial_{\Pi_2}(F_n) \Big) \\
	&=\Big(\partial_{\Pi_1}(\partial_{\Pi_1}(F_1)),\partial_{\Pi_1}(\partial_{\Pi_1}(F_1))+\partial_{\Pi_2}(\partial_{\Pi_1}(F_1))+\partial_{\Pi_1}(\partial_{\Pi_1}(F_2)),\cdots,\\
	&\underbrace{\partial_{\Pi_2}(\partial_{\Pi_2}(F_{i-2}))+\partial_{\Pi_2}(\partial_{\Pi_1}(F_{i-1}))+\partial_{\Pi_1}(\partial_{\Pi_2}(F_{i-1}))+\partial_{\Pi_1}(\partial_{\Pi_1}(F_i))}_{3\leq i\leq n-1},\cdots,\\
	&\partial_{\Pi_2}(\partial_{\Pi_2}(F_{n-1}))+\partial_{\Pi_2}(\partial_{\Pi_1}(F_n))+\partial_{\Pi_1}(\partial_{\Pi_2}(F_n)),\partial_{\Pi_2}(\partial_{\Pi_2}(F_n)) \Big)=0.
\end{align*}
Thus we have $\delta_c\circ\delta_c=0$.
\end{proof}
\begin{defn}
	Let $(L,[\cdot,\cdot]_1,[\cdot,\cdot,\cdot]_1,[\cdot,\cdot]_2,[\cdot,\cdot,\cdot]_2)$ be a compatible Lie Yamaguti algebra. The cohomology of the cochain complex $(\mathfrak{C}_c^\star(L,L),\delta_c^\star)$ is called the cohomology of $(L,[\cdot,\cdot]_1,[\cdot,\cdot,\cdot]_1,[\cdot,\cdot]_2,[\cdot,\cdot,\cdot]_2)$ we denote the $n$-th cohomology group by $\mathcal{H}^n(L,L)$.
\end{defn}
\section{Infinitesimal deformations of compatible Lie Yamaguti algebras} \label{section deformation}
In this section, we study infinitesimal deformation of compatible Lie Yamaguti algebras.
\begin{defn}
	Let $(L,[\cdot,\cdot]_1,[\cdot,\cdot,\cdot]_1,[\cdot,\cdot]_2,[\cdot,\cdot,\cdot]_2)$ be a compatible Lie Yamaguti algebra and $\mu_i:L\times L\rightarrow L$ and $\lambda_i:L\times L\times L\rightarrow L$. Define 
	\begin{align*}
		[x,y]_t^i=&[x,y]_i+t\mu_i(x,y),\\
		[x,y,z]_t^i=&[x,y,z]_i+t\lambda_i(x,y,z),\quad \forall x,y,z\in L,\quad \forall i\in\{1,2\}.
	\end{align*}
If for any $t$, $(L,[\cdot,\cdot]_t^1,[\cdot,\cdot,\cdot]_t^1,[\cdot,\cdot]_t^2,[\cdot,\cdot,\cdot]_t^2)$ is a compatible Lie Yamaguti algebra then we say $(\mu_1,\lambda_1,\mu_2,\lambda_2)$ generates an infinitesimal deformation of $(L,[\cdot,\cdot]_1,[\cdot,\cdot,\cdot]_1,[\cdot,\cdot]_2,[\cdot,\cdot,\cdot]_2)$.
\end{defn}
We give some useful notations:
\begin{equation*}
	\Pi_i=(\pi_i,\omega_i),\quad \Pi_t^i=(\pi_t^i,\omega_t^i),\quad \pi_i(x,y)=[x,y]_i,\quad \omega_i(x,y,z)=[x,y,z]_i \text{ and } \pi_t^i=\pi_i+t\mu_i,\quad \omega_t^i=\omega_i+t\lambda_i.
\end{equation*}
By theorem \eqref{MC of compatible LYA} $(L,[\cdot,\cdot]_t^1,[\cdot,\cdot,\cdot]_t^1,[\cdot,\cdot]_t^2,[\cdot,\cdot,\cdot]_t^2)$ is an infinitesimal deformation of $(L,\pi_1,\omega_1,\pi_2,\omega_2)$ the we have the following 
\begin{equation}
	[\Pi_t^1,\Pi_t^1]_\mathrm{LieY}=0,\label{eqt compatible MC1}
\end{equation}
\begin{equation}
	[\Pi_t^2,\Pi_t^2]_\mathrm{LieY}=0,\label{eqt compatible MC2}
\end{equation}
\begin{equation}
	[\Pi_t^1,\Pi_t^2]_\mathrm{LieY}=0.\label{eqt compatible MC13}
\end{equation}
When we explore the equation \eqref{eqt compatible MC1} we obtain the following two equations 
\begin{equation}\label{equation1}
	\begin{split}
 &\omega_1(x,y,\mu_1(z,w))+\lambda_1(x,y,\pi_1(x,y,w))-\pi_1(z,\lambda_1(x,y,w))\\
 &-\mu_1(z,\omega_1(x,y,w))-\pi_1(\lambda_1(x,y,z),w)-\mu_1(\omega_1(x,y,z),w)=0,
\end{split}
\end{equation}
\begin{equation}\label{equation2}
	\begin{split}
		&\omega_1(x,y,\lambda_1(z,w,s))+\lambda_1(x,y,\omega_1(z,w,s))-\omega_1(y,\lambda_1(x,y,w),s)\\
		&-\lambda_1(y,\omega_1(x,y,w),s)-\omega_1(\lambda_1(x,y,z),w,s)-\lambda_1(\omega_1(x,y,z),w,s)\\
		&-\omega_1(z,w,\lambda_1(x,y,s))-\lambda_1(z,w,\omega(x,y,s))=0.
	\end{split}
\end{equation}
One can see that equations \eqref{equation1} and \eqref{equation2} are equivalent to 
\begin{equation}\label{equation3}
	[\Pi_1,(\mu_1,\lambda_1)]_\mathrm{LieY}=0.
\end{equation}
And similarly equation \eqref{eqt compatible MC2}, can be explored to the following 
\begin{equation}\label{equation4}
	\begin{split}
		&\omega_2(x,y,\mu_2(z,w))+\lambda_2(x,y,\pi_2(x,y,w))-\pi_2(z,\lambda_2(x,y,w))\\
		&-\mu_2(z,\omega_2(x,y,w))-\pi_2(\lambda_2(x,y,z),w)-\mu_2(\omega_2(x,y,z),w)=0,
	\end{split}
\end{equation}
\begin{equation}\label{equation5}
	\begin{split}
		&\omega_2(x,y,\lambda_2(z,w,s))+\lambda_2(x,y,\omega_2(z,w,s))-\omega_2(y,\lambda_2(x,y,w),s)\\
		&-\lambda_2(y,\omega_2(x,y,w),s)-\omega_2(\lambda_2(x,y,z),w,s)-\lambda_2(\omega_2(x,y,z),w,s)\\
		&-\omega_2(z,w,\lambda_2(x,y,s))-\lambda_2(z,w,\omega(x,y,s))=0.
	\end{split}
\end{equation}
So we obtain the following equation
\begin{equation}\label{equation6}
	[\Pi_2,(\mu_2,\lambda_2)]_\mathrm{LieY}=0.
\end{equation}
When we explore the last equation \eqref{eqt compatible MC13} $([\Pi_t^1,\Pi_t^2]_\mathrm{LieY}=0)$ we obtain the following two equations
\begin{equation}\label{equation7}
	\begin{split}
		&\omega_1(x,y,\mu_2(z,w))+\lambda_1(x,y,\pi_2(z,w))-\pi_1(z,\lambda_2(x,y,w))-\mu_1(z,\omega_2(x,y,w))\\
		&-\pi_1(\lambda_2(x,y,z,),w)-\mu_1(\omega_2(x,y,z),w)+\omega_2(x,y,\mu_1(z,w))+\lambda_2(x,y,\pi_1(z,w))\\
		&-\pi_2(z,\lambda_1(x,y,w))-\mu_2(z,\omega_1(x,y,w))-\pi_2(\lambda_1(x,y,z,),w)-\mu_2(\omega_1(x,y,z),w)=0,
	\end{split}
\end{equation} 
\begin{equation}\label{equation8}
	\begin{split}
		&\omega_1(x,y,\lambda_2(z,w,s))+\lambda_1(x,y,\omega_2(z,w,s))-\omega_1(y,\lambda_2(x,y,w),s)-\lambda_1(y,\omega_2(x,y,w),s)\\
		&-\omega_1(\lambda_2(x,y,z),w,s)-\lambda_1(\omega_2(x,y,z),w,s)-\omega_1(z,w,\lambda_2(x,y,s))-\lambda_1(z,w,\omega_2(x,y,s))\\
		&\omega_2(x,y,\lambda_1(z,w,s))+\lambda_2(x,y,\omega_1(z,w,s))-\omega_2(y,\lambda_1(x,y,w),s)-\lambda_2(y,\omega_1(x,y,w),s)\\
		&-\omega_2(\lambda_1(x,y,z),w,s)-\lambda_2(\omega_1(x,y,z),w,s)-\omega_2(z,w,\lambda_1(x,y,s))-\lambda_2(z,w,\omega_1(x,y,s))\\
	\end{split}
\end{equation} 
One can see that equations \eqref{equation7} and \eqref{equation8} are equivalent to 
\begin{equation}\label{equation9}
	[\Pi_1,(\mu_2,\lambda_2)]_\mathrm{LieY}+[\Pi_2,(\mu_1,\lambda_1)]_\mathrm{LieY}=0
\end{equation}
After these equations we introduce the following theorem 
\begin{thm}
	Let $(L,[\cdot,\cdot]_t^1,[\cdot,\cdot,\cdot]_t^1,[\cdot,\cdot]_t^2,[\cdot,\cdot,\cdot]_t^2)$ be an infinitesimal deformation of a compatible Lie Yamaguti algebra $(L,[\cdot,\cdot]_1,[\cdot,\cdot,\cdot]_1,[\cdot,\cdot]_2,[\cdot,\cdot,\cdot]_2)$ generated by $(\mu_1,\lambda_1,\mu_2,\lambda_2)$. \\
	Then $\big((\mu_1,\lambda_1),(\mu_2,\lambda_2) \big)\in\mathfrak{C}_c^2(L,L)$ is a (2-3)-cocycle for $(L,[\cdot,\cdot]_1,[\cdot,\cdot,\cdot]_1,[\cdot,\cdot]_2,[\cdot,\cdot,\cdot]_2)$. 
\end{thm}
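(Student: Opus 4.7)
The plan is to unpack the definition of $\delta_c$ on the $2$-cochain $\big((\mu_1,\lambda_1),(\mu_2,\lambda_2)\big)$ and match each resulting component against the equations derived earlier from the infinitesimal deformation conditions. Recall that for a general $2$-cochain $(F_1,F_2)\in \mathfrak{C}_c^2(L,L)$, the coboundary operator produces
\begin{equation*}
\delta_c(F_1,F_2)=\bigl(\partial_{\Pi_1}(F_1),\ \partial_{\Pi_2}(F_1)+\partial_{\Pi_1}(F_2),\ \partial_{\Pi_2}(F_2)\bigr),
\end{equation*}
so the claim reduces to verifying three bracket identities in the graded Lie algebra $(\mathfrak{C}^\star(L,L),[\cdot,\cdot]_\mathrm{LieY})$.

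First, I would expand each of the three Maurer-Cartan conditions \eqref{eqt compatible MC1}, \eqref{eqt compatible MC2}, \eqref{eqt compatible MC13} as polynomials in the deformation parameter $t$. Using bilinearity of the graded bracket,
\begin{align*}
[\Pi_t^i,\Pi_t^i]_\mathrm{LieY}&=[\Pi_i,\Pi_i]_\mathrm{LieY}+2t\,[\Pi_i,(\mu_i,\lambda_i)]_\mathrm{LieY}+t^2[(\mu_i,\lambda_i),(\mu_i,\lambda_i)]_\mathrm{LieY},\\
[\Pi_t^1,\Pi_t^2]_\mathrm{LieY}&=[\Pi_1,\Pi_2]_\mathrm{LieY}+t\bigl([\Pi_1,(\mu_2,\lambda_2)]_\mathrm{LieY}+[\Pi_2,(\mu_1,\lambda_1)]_\mathrm{LieY}\bigr)\\
&\quad+t^2[(\mu_1,\lambda_1),(\mu_2,\lambda_2)]_\mathrm{LieY}.
\end{align*}
The constant-in-$t$ terms vanish by Theorem \ref{MC of compatible LYA} applied to the original compatible Lie Yamaguti algebra. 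Collecting the coefficients of $t$ in each expansion therefore yields precisely
\begin{equation*}
[\Pi_1,(\mu_1,\lambda_1)]_\mathrm{LieY}=0,\quad [\Pi_2,(\mu_2,\lambda_2)]_\mathrm{LieY}=0,\quad [\Pi_1,(\mu_2,\lambda_2)]_\mathrm{LieY}+[\Pi_2,(\mu_1,\lambda_1)]_\mathrm{LieY}=0,
\end{equation*}
which are exactly the identities \eqref{equation3}, \eqref{equation6}, and \eqref{equation9} already assembled in the excerpt from the component-wise equations \eqref{equation1}-\eqref{equation2}, \eqref{equation4}-\eqref{equation5}, and \eqref{equation7}-\eqref{equation8}.

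Finally, I would assemble these into the statement $\delta_c\big((\mu_1,\lambda_1),(\mu_2,\lambda_2)\big)=0$ by reading off each slot of the definition of $\delta_c$ on $\mathfrak{C}_c^2(L,L)$. The three identities above populate the first, third, and middle slots respectively, which gives the cocycle condition. The main obstacle in this proof is essentially bookkeeping: one has to be confident that the component-wise computations in \eqref{equation1}-\eqref{equation8}, obtained by expanding $[\Pi_t^i,\Pi_t^j]_\mathrm{LieY}=0$ using the explicit formulas for $P\circ Q$ given before Theorem \ref{thm MC of LYA}, really reassemble into the bracket expressions $[\Pi_i,(\mu_j,\lambda_j)]_\mathrm{LieY}$ without sign errors. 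Once this identification is accepted (as it is stated in the excerpt), the cocycle property follows immediately and no further computation is needed.
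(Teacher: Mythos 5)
Your proposal is correct and follows essentially the same route as the paper: the paper likewise expands the three Maurer--Cartan conditions for $\Pi_t^1,\Pi_t^2$, extracts the identities \eqref{equation3}, \eqref{equation6}, \eqref{equation9} from the coefficient of $t$, and reads off $\delta_c\big((\mu_1,\lambda_1),(\mu_2,\lambda_2)\big)=0$ from the definition of the coboundary operator. Your explicit $t$-expansion of the graded brackets is a slightly cleaner presentation of the same bookkeeping the paper carries out component-wise in equations \eqref{equation1}--\eqref{equation9}.
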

\begin{proof}
Recall that if $\big((F_1,G_1),(F_2,G_2) \big)\in\mathfrak{C}_c^2(L,L)$ is a (2-3)-cocycle for $(L,[\cdot,\cdot]_1,[\cdot,\cdot,\cdot]_1,[\cdot,\cdot]_2,[\cdot,\cdot,\cdot]_2)$ means that $\delta_c((F_1,G_1),(F_2,G_2))=0$.\\
Let $(\mu_1,\lambda_1),(\mu_2,\lambda_2)\in\mathfrak{C}_c^2(L,L)$ generates an infinitesimal deformation of $(L,[\cdot,\cdot]_1,[\cdot,\cdot,\cdot]_1,[\cdot,\cdot]_2,[\cdot,\cdot,\cdot]_2)$ and by the definition of the coboundary operator $\delta_c$ we have 
\begin{equation*}
	\delta_c((\mu_1,\lambda_1),(\mu_2,\lambda_2))=\Big([\Pi_1,(\mu_1,\lambda_1)]_\mathrm{LieY},[\Pi_2,(\mu_1,\lambda_1)]_\mathrm{LieY}+[\Pi_1,(\mu_2,\lambda_2)]_\mathrm{LieY},[\Pi_2,(\mu_2,\lambda_2)]_\mathrm{LieY}\Big).
\end{equation*}	
According to equations \eqref{equation3},\eqref{equation6} and \eqref{equation9} we obtain that $\delta_c((\mu_1,\lambda_1),(\mu_2,\lambda_2))=0$.
\end{proof}
\begin{re}
	The case of compatible Lie Yamaguti algebras is more complicated than that of compatible Lie algebras. Because in the case of compatible Lie algebras if $(\mu_1,\mu_2)$ is an infinitesimal deformation of $(L,[\cdot,\cdot]_1,[\cdot,\cdot]_2)$ then $(L,\mu_1,\mu_2)$ is a compatible Lie algebras. But in the case of Lie Yamaguti algebras if $(\mu_1,\lambda_1,\mu_2,\lambda_2)$ is an infinitesimal deformation of $(L,[\cdot,\cdot]_1,[\cdot,\cdot,\cdot]_1,[\cdot,\cdot]_2,[\cdot,\cdot,\cdot]_2)$ does not mean that $(L,\mu_1,\lambda_1,\mu_2,\lambda_2)$ is a compatible Lie algebra. Only we have, after exploring the equation \eqref{eqt compatible MC2}, the two following equations which are exactly \textbf{(CLY3)} and \textbf{(CLY4)}.
	\begin{align*}
		&\bullet \lambda_1(x,y,\mu_2(z,w))-\mu_1(z,\lambda_2(x,y,w))-\mu_1(\lambda_2(x,y,z),w)\\
		&+\lambda_2(x,y,\mu_1(z,w))-\mu_2(z,\lambda_1(x,y,w))-\mu_2(\lambda_1(x,y,z),w)=0,\\
		&\bullet \lambda_1(x,y,\lambda_2(z,w,s))-\lambda_1(y,\lambda_2(x,y,w),s)-\lambda_1(\lambda_2(x,y,z),w,s)-\lambda_1(z,w,\lambda_2(x,y,s))\\
		&+\lambda_2(x,y,\lambda_1(z,w,s))-\lambda_2(y,\lambda_1(x,y,w),s)-\lambda_2(\lambda_1(x,y,z),w,s)-\lambda_2(z,w,\lambda_1(x,y,s))=0.
	\end{align*}
\end{re}

	\section{Compatible Pre-Lie Yamaguti algebra} \label{sec4}
	In this section, we study the relation between compatible pre-Lie Yamaguti algebras compatible pre-Lie Yamaguti algebras and compatible Lie Yamaguti algebras using the $ \textbf{\emph{Rota-Baxter operator}}$. From now onward, we only consider $\textbf{\emph{Rota-Baxter operator}}$ of weight $0$. \\
	We start by recalling the definition of pre-Lie algebras \cite{G1}
	\begin{defn}
		Let $L$ be a vector space over a field $\mathbb{K}$ with a bilinear product $\star: L\times L\to L$ defined by $(x, y) \mapsto x \star y$, then $L$ is called a pre-Lie algebra (or left-symmetric algebra), if 
		\begin{equation*}
			(x \star y) \star z - x \star (y \star z)=(y \star x) \star z-y \star (x \star z)
		\end{equation*}for all $x, y, z \in L$.
	\end{defn}
	It is necessary to define the compatible pre-Lie algebra.
	\begin{defn}
		A compatible pre-Lie algebra is a triple $(L, \star_1, \star_2)$ where $(L, \star_1)$ and $(L, \star_2)$ are two pre-Lie algebras such that:
		\begin{equation}
			(x \star_1 y) \star_2 z - x \star_1 (y \star_2 z)=(y \star_1 x) \star_2 z-y \star_1 (x \star_2 z).
			\label{eq1.6}
		\end{equation}
	\end{defn}
	In \cite{S1}, the authors in definition $(3.6)$ define the pre-Lie Yamaguti algebra. By using pre-Lie Yamaguti algebra and \eqref{eq1.6},  we can define the compatible pre-Lie Yamaguti algebras as follows
	\begin{defn}
		A compatible-pre Lie Yamaguti algebra (compatible pre-Lie Yamaguti algebras) is a $5$-tuple $(L, \star_1, \{\cdot, \cdot, \cdot \}_1, \star_2, \{\cdot, \cdot, \cdot \}_2)$ where $(L, \star_1, \{\cdot, \cdot, \cdot \}_1)$ and $(L, \star_2, \{\cdot, \cdot, \cdot \}_2)$ are two pre-Lie Yamaguti algebras such that :
		\begin{enumerate}
			\item[(CPLY1):] $\displaystyle\sum_{i, j \in \{1, 2\} ; i \neq j} \Big(\{z, [x, y]_{i, c}, w\}_j-\{y\star_i z, x, w\}_j+\{x\star_i z, y, w\}_j \Big)=0, $
			\item[(CPLY2):]$\{x, y, [z, w]_{2, c}\}_1+\{x, y, [z, w]_{1, c}\}_2=z \star_1 \{x, y, w\}_2+z \star_2 \{x, y, w\}_1-w \star_1\{x, y, z\}_2-w \star_2\{x, y, z\}_1, $ 
			\item[(CPLY3):] $\displaystyle\sum_{i, j \in \{1, 2\} ; i \neq j} \Big( \{\{x, y, z\}_i, w, t\}_j-\{\{x, y, w\}_i, z, t\}_j-\{x, y, \{z, w, t\}_{i, D}\}_j-\{x, y, \{z, w, t\}_{i}\}_j+\{x, y, \{w, z, t\}_{i}\}_j\\+\{z, w, \{x, y, t\}_{i}\}_{j, D} \Big)=0, $
			\item[(CPLY4):] $\displaystyle\sum_{i, j \in \{1, 2\} ; i \neq j} \Big( \{z, \{x, y, w\}_{i, D}, t\}_j+ \{z, \{x, y, w\}_i, t\}_j-\{z, \{y, x, w\}_i, t\}_j+\{z, w, \{x, y, t\}_{i, D}\}_j\\+\{z, w, \{x, y, t\}_i\}_j-\{z, w, \{y, x, t\}_i\}_j
			=\{x, y, \{z, w, t\}_i\}_{j, D}-\{\{x, y, z\}_{i, D}, w, t\}_{j}\Big)$, 
			\item[(CPLY5):]$\displaystyle\sum_{i, j \in \{1, 2\} ; i \neq j}\Big( \{x, y, z\}_{i, D} \star_j w + \{x, y, z\}_i \star_j w - \{y, x, z\}_i \star_j w = \{x, y, z \star_i w\}_{j, D}- z \star_i \{x, y, w\}_{j, D}\Big).$
		\end{enumerate} 
		where \label{Def4.3}
		\begin{align*}[x, y]_{i, c}&=x \star_i y- y \star_i x\\ \{x, y, z\}_{i, D}&= \{z, y, x\}_i- \{z, x, y\}_i+ (y, x, z)_i- (x, y, z)_i \\ (x, y, z)_i&=(x \star_i y) \star_i z - x \star_i (y \star_i z)
		\end{align*}
	\end{defn}
	\begin{lem}
		We have the following qualities: 
		\begin{enumerate}
			\item[(i)]$ 	\{[x, y]_{i, c}, z, t\}_{j, D}+\{[y, z]_{i, c}, x, t\}_{j, D}+\{[z, x]_{i, c}, y, t\}_{j, D}=0, $
			\item[(ii)] $ \{x, y, \{z, w, t\}_{i, D}\}_{j, D}-\{\{x, y, z\}_{i, D}, w, t\}_{j, D}-\{\{x, y, z\}_i, w, t\}_{j, D}+\{\{y, x, z\}_i, w, t\}_{j, D}-\{z, \{x, y, w\}_{i, D}, t\}_{j, D} \\
			-\{z, \{x, y, w\}_i, t\}_{j, D}+\{z, \{y, x, w\}_i, t\}_{j, D}-\{z, w, \{x, y, t\}_{i, D}\}_{j, D}=0.~~~~\forall i, j \in \{1, 2\}~~\textit{ such that }~i \neq j.$
		\end{enumerate}
		\label{Lem4.1}
	\end{lem}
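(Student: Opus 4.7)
The plan is to prove both identities by brute-force expansion and careful rearrangement, exploiting the individual pre-Lie Yamaguti axioms for each of $(L, \star_i, \{\cdot,\cdot,\cdot\}_i)$ $(i = 1, 2)$ together with the mixed compatibility conditions (CPLY1)--(CPLY5) from Definition \ref{Def4.3}. For each assertion I would first substitute
\[
[x,y]_{i,c} = x \star_i y - y \star_i x, \qquad \{a,b,c\}_{j,D} = \{c,b,a\}_j - \{c,a,b\}_j + (b,a,c)_j - (a,b,c)_j,
\]
so that the left-hand side is expressed entirely in terms of the primitive operations $\star_i$, $\star_j$, $\{\cdot,\cdot,\cdot\}_i$, $\{\cdot,\cdot,\cdot\}_j$ and the associators $(\cdot,\cdot,\cdot)_i$, $(\cdot,\cdot,\cdot)_j$.

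For identity (i), after expansion the cyclic sum over $x,y,z$ breaks into groups indexed by the shape of the term: purely $\star$-terms coming from the associators, and mixed terms pairing $\star_i$ with $\{\cdot,\cdot,\cdot\}_j$. The mixed terms can be collected into expressions of the form $\{w, [x,y]_{i,c}, u\}_j - \{y \star_i w, x, u\}_j + \{x \star_i w, y, u\}_j$, which vanish by (CPLY1) after also using the single-algebra identity (1) of the definition of a pre-Lie Yamaguti algebra for $(L,\star_j,\{\cdot,\cdot,\cdot\}_j)$. The purely associator terms should then cancel by the cyclic symmetry of the sum together with the standard identity $\sum_{\text{c.p.}}\bigl((x,y,z)_j - (y,x,z)_j\bigr) = 0$, which holds in any pre-Lie algebra.

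For identity (ii), the same strategy applies but the combinatorics are heavier. After expanding every occurrence of $\{\cdot,\cdot,\cdot\}_{i,D}$ and $\{\cdot,\cdot,\cdot\}_{j,D}$ via the $D$-formula, the left-hand side decomposes into blocks matching the structure of (CPLY3), (CPLY4), (CPLY5). Each block is a cross-identity whose vanishing, combined with the three associator axioms 3--5 of pre-Lie Yamaguti algebras applied separately to $(L,\star_i,\{\cdot,\cdot,\cdot\}_i)$ and $(L,\star_j,\{\cdot,\cdot,\cdot\}_j)$, forces the total sum to zero.

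A more conceptual route, which I would attempt in parallel, is to first establish that $[\cdot,\cdot]_{i,c}$ and $\{\cdot,\cdot,\cdot\}_{i,D}$ together define a Lie Yamaguti structure on $L$ (this is the compatible analogue of Theorem 3.13 in \cite{S1}) and that the two induced Lie Yamaguti structures are compatible in the sense of Definition \ref{defn3.1}. Once this is done, (i) and (ii) can be read off as the $(i,j)$-component of the compatibility axioms (CLY2) and (CLY4) for the induced compatible Lie Yamaguti algebra. The main obstacle in either route is purely bookkeeping: identity (ii) produces on the order of fifty elementary terms once fully expanded, and matching them to the correct compatibility axiom requires disciplined organization rather than any new idea.
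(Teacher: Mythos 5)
A preliminary remark: the paper states this lemma with no proof at all, so there is no ``paper's proof'' to compare your attempt against; what follows judges your plan on its own terms, and as written it is an outline with several concrete gaps rather than a proof. First, there is a quantifier mismatch. The compatibility axioms (CPLY1)--(CPLY5) are each a single identity of the form $\sum_{i,j\in\{1,2\},\,i\neq j}(\cdots)=0$, so they control only the \emph{sum} of the $(1,2)$- and $(2,1)$-contributions, whereas the lemma asserts its identities for each fixed pair $(i,j)$. Your claim that certain blocks ``vanish by (CPLY1)'' (resp.\ by (CPLY3)--(CPLY5)) is therefore unjustified for a fixed $(i,j)$; at best this strategy proves the version of the lemma summed over $i\neq j$ (which is in fact all the subsequent theorem uses, since (CLY1)--(CLY4) are themselves summed). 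Second, your treatment of the associator terms assumes $(L,\star_j)$ is a pre-Lie algebra: you invoke $(x,y,z)_j=(y,x,z)_j$. But the definition of a pre-Lie Yamaguti algebra in this paper imposes only axioms (1)--(5) on $(\star,\{\cdot,\cdot,\cdot\})$ and does \emph{not} require $\star$ to be left-symmetric, so the terms $(z,[x,y]_{i,c},t)_j-([x,y]_{i,c},z,t)_j$ produced by expanding $\{[x,y]_{i,c},z,t\}_{j,D}$ do not cancel termwise, and the vanishing of their cyclic sum needs a genuine argument. Third, the grouping you describe for (i) does not match what the expansion actually produces: the commutator $[x,y]_{i,c}$ lands in the second and third slots of $\{\cdot,\cdot,\cdot\}_j$ with $t$ in the first slot, so one needs both (CPLY1) (second slot) and (CPLY2) (third slot), and each application introduces new terms of the shape $\{a\star_i b,c,d\}_j$ and $a\star_i\{b,c,d\}_j$ whose cancellation is the actual content of the lemma and is nowhere carried out.

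Your parallel ``conceptual route'' cannot substitute for this computation because it is circular: the lemma is invoked precisely in the paper's proof of the theorem that $(L^c,[\cdot,\cdot]_{1,c},[\cdot,\cdot,\cdot]_{1,c},[\cdot,\cdot]_{2,c},[\cdot,\cdot,\cdot]_{2,c})$ is a compatible Lie Yamaguti algebra, so that theorem is not available as an input here. Even if it were, the sub-adjacent ternary bracket is $[x,y,z]_{j,c}=\{x,y,z\}_{j,D}+\{x,y,z\}_j-\{y,x,z\}_j$, not $\{x,y,z\}_{j,D}$ alone, so (CLY2) and (CLY4) for $L^c$ would give the lemma's identities only modulo extra $\{\cdot,\cdot,\cdot\}_j$-terms, and again only after summing over $i\neq j$. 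A complete proof must resolve the quantifier issue (either prove the fixed-$(i,j)$ statement without appealing to the summed axioms, or restate the lemma as a sum over $i\neq j$ and check that this weaker form still suffices for the theorem that follows) and must then exhibit the term-by-term cancellation explicitly.
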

	In the next theorem, we prove, how to define a compatible Lie Yamaguti algebras from the compatible pre-Lie Yamaguti algebras. As pre-Lie algebra give rise to Lie algebra via the commutator, the same way we give rise to compatible Lie Yamaguti algebras.
	\begin{thm}Let $(L, \star_1, \{\cdot, \cdot, \cdot\}_1, \star_2, \{\cdot, \cdot, \cdot\}_2)$ be a compatible pre-Lie Yamaguti algebra. Then the operations 
		\begin{align*}
			[x, y]_{1, c}&:=x \star_1y - y \star_1 x \\ \ [x, y]_{2, c}&:=x \star_2 y - y \star_2 x \\
			[x, y, z]_{1, c}&:=\{x, y, z\}_{1, D}+\{x, y, z\}_1-\{y, x, z\}_1 \\
			[x, y, z]_{2, c}&:=\{x, y, z\}_{2, D}+\{x, y, z\}_2-\{y, x, z\}_2
		\end{align*} define a compatible Lie Yamaguti algebra, which is called the compatible sub-adjacent Lie Yamaguti algebra and is denoted by $(L^c, [\cdot, \cdot]_{1, c}, [\cdot, \cdot, \cdot]_{1, c}, [\cdot, \cdot]_{2, c}, [\cdot, \cdot, \cdot]_{2, c})$ or simply by $L^c$.
	\end{thm}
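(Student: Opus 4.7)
The plan is to verify the theorem in two stages. First, applying the sub-adjacent construction for a single pre-Lie Yamaguti algebra (Theorem $3.13$ of \cite{S1}) separately to $(L,\star_1,\{\cdot,\cdot,\cdot\}_1)$ and to $(L,\star_2,\{\cdot,\cdot,\cdot\}_2)$ immediately yields that each of $(L,[\cdot,\cdot]_{1,c},[\cdot,\cdot,\cdot]_{1,c})$ and $(L,[\cdot,\cdot]_{2,c},[\cdot,\cdot,\cdot]_{2,c})$ is a Lie Yamaguti algebra. This takes care of all the internal axioms (LY1)--(LY4) for the two structures individually, and reduces the entire theorem to checking the four cross-compatibility conditions (CY1)--(CY4) of Definition~\ref{defn3.1} for the pair.

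The strategy for these four conditions is to expand everything in terms of the pre-Lie data via
$$[x,y]_{i,c}=x\star_i y-y\star_i x,\qquad [x,y,z]_{i,c}=\{x,y,z\}_{i,D}+\{x,y,z\}_i-\{y,x,z\}_i,$$
and then match the resulting expressions against the compatible pre-Lie Yamaguti identities (CPLY1)--(CPLY5) and Lemma~\ref{Lem4.1}. Concretely, (CY4) unfolds (after separating the $\{\cdot,\cdot,\cdot\}_{i,D}$ part of $[x,y,z]_{i,c}$ from its symmetric remainder) into exactly the trilinear-on-commutator statement recorded in (CPLY2); (CY1) is the purely bilinear cyclic sum which, after writing each term as an associator-like combination of $\star_1$ and $\star_2$, collapses to the pre-Lie compatibility encoded in \eqref{eq1.6}; and (CY2) follows by the same reduction together with Lemma~\ref{Lem4.1}(i), which precisely cancels the cross $\{\cdot,\cdot,\cdot\}_{j,D}$ contributions produced when $[[x,y]_{i,c},z,t]_{j,c}$ is expanded using the two pieces of $[\cdot,\cdot,\cdot]_{j,c}$.

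The genuinely hard step will be (CY3), which carries trilinear brackets in both the inner and outer positions, so that a naive expansion produces a large number of terms with many different argument-orderings. The plan is to split $[x,y,z]_{i,c}$ as $\{x,y,z\}_{i,D}+\big(\{x,y,z\}_i-\{y,x,z\}_i\big)$ and process the expanded sum by symmetry type: the $D$-$D$ block is absorbed by Lemma~\ref{Lem4.1}(ii); the mixed $D$-symmetric and symmetric-$D$ blocks are absorbed by (CPLY4) and (CPLY5); and the remaining symmetric-symmetric block is absorbed by (CPLY3). The combinatorial bookkeeping is the main obstacle, since each of (CPLY3)--(CPLY5) already contains several terms, and the exchange $i\leftrightarrow j$ doubles the count; organizing the expansion by the symmetry type of the first two slots and by the label pair $(i,j)$ is what will keep the calculation tractable. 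Once each block is matched to its corresponding (CPLY$k$) identity, the verification of (CY3) is complete and the theorem follows.
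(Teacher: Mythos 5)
Your proposal follows essentially the same route as the paper's proof: the paper likewise leaves the axioms (LY1)--(LY4) for each individual structure $(L,[\cdot,\cdot]_{i,c},[\cdot,\cdot,\cdot]_{i,c})$ to the known single-algebra sub-adjacent construction and only verifies the cross-compatibility conditions, by expanding the brackets in terms of $\star_i$, $\{\cdot,\cdot,\cdot\}_i$ and $\{\cdot,\cdot,\cdot\}_{i,D}$ and matching the resulting blocks against \eqref{eq1.6}, (CPLY1)--(CPLY5) and Lemma \ref{Lem4.1}. Your assignments for (CY1) (reduction to \eqref{eq1.6}) and (CY2) (via (CPLY1) and Lemma \ref{Lem4.1}(i)) coincide with the paper's.

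One bookkeeping correction before you execute the plan: you have attached (CPLY5) to the wrong condition. (CY3) is purely trilinear --- every term of $[a,b,[x,y,z]_{i,c}]_{j,c}$ and $[[a,b,x]_{i,c},y,z]_{j,c}$ expands into $\{\cdot,\cdot,\cdot\}_j$ or $\{\cdot,\cdot,\cdot\}_{j,D}$ applied to trilinear expressions, with no occurrence of $\star_i$ --- so it is absorbed by (CPLY3), (CPLY4) and Lemma \ref{Lem4.1}(ii) alone, which is exactly how the paper records it. By contrast, (CY4) is where (CPLY5) is indispensable: expanding $[a,b,[x,y]_{1,c}]_{2,c}$ produces the block $\{a,b,\,x\star_1 y-y\star_1 x\}_{2,D}$, and expanding $[[a,b,x]_{1,c},y]_{2,c}$ produces $\{a,b,x\}_{1,D}\star_2 y$ and its companions; (CPLY2) only disposes of the non-$D$ blocks $\{a,b,[x,y]_{i,c}\}_j-\{b,a,[x,y]_{i,c}\}_j$, and the $D$-blocks are precisely the content of (CPLY5). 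As written, your verification of (CY4) would stall at those terms. Once (CPLY5) is moved from your (CY3) ledger to your (CY4) ledger, the plan agrees with the paper's proof, which derives (CY3) from (CPLY3), (CPLY4) and the lemma, and (CY4) from (CPLY2) together with (CPLY5).
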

	\begin{proof} Let us prove the compatible Lie Yamaguti algebra's identities one by one for $I=\{1, 2\}$ and $x, y, z, w, t \in L$. \begin{enumerate}\item For $(CLY1)$, consider:\begin{eqnarray*}\begin{aligned} &\displaystyle \sum_{i, j \in I;i \neq j} [[x, y]_{i, c}, z]_{j, c} + c.p.\\=&\displaystyle \sum_{i, j \in I;i \neq j} \Big( (x \star_i y) \star_j z-z \star_j (x \star_i y)-(y \star_i x) \star_j z+z \star_j (y \star_i x)+(y \star_i z) \star_j x-x \star_j (y \star_i z)\\&-(z \star_i y) \star_j x+x \star_j (z \star_i y)+(z \star_i x) \star_j y-y \star_j (z \star_i x)-(x \star_i z) \star_j y+y \star_j (x \star_i z) \Big) \\\overset{\eqref{eq1.6}}{=}& 0.
			\end{aligned}\end{eqnarray*}
			\item For $(CLY2)$, consider
			\begin{eqnarray*}
				\begin{aligned}
					&[[x, y]_{i, c}, z, t]_{j, c}+[[y, z]_{i, c}, x, t]_{j, c}+[[z, x]_{i, c}, y, t]_{j, c} \\
					=&\{x\star_i y, z, t\}_{j, D}+\{x\star_i y, z, t\}_j-\{z, x\star_i y, z, t\}_j-\{y\star_i x, z, t\}_{j, D}-\{y\star_i x, z, t\}_j+\{z, y\star_i x, t\}_j \\
					&+\{y \star_i z, x, t\}_{j, D}+\{y \star_i z, x, t\}_2-\{x, y \star_i z, t\}_2-\{z \star_i y, x, t\}_{j, D}-\{z \star_i y, x, t\}_j+\{x, z \star_i y, t\}_j \\
					&+\{z \star_i x, y, t\}_{j, D}+\{z \star_i x, y, t\}_j-\{y, z \star_i x, t\}_j -\{x \star_i z, y, t\}_{j, D}-\{x \star_i z, y, t\}_j+\{y, x \star_i z, t\}_j.
				\end{aligned}
			\end{eqnarray*}	
			Using $(CPLY1)$ in Definition \eqref{Def4.3} and  Lemma \eqref{Lem4.1}, we obtain 
			\begin{align*}
				&[[x, y]_{i, c}, z, t]_{j, c}+[[y, z]_{i, c}, x, t]_{j, c}+[[z, x]_{i, c}, y, t]_{j, c}=0.
			\end{align*}
			\item $(CLY3)$ can be proof similar to $(CLY 2)$. 
			\item For $(CLY4)$, we just use $(PCLY3)$, $(PCLY4)$ and Lemma \eqref{Lem4.1}.
			\item 
			And for $(CLY5)$, we need $(PCLY2)$ and $(PCLY5)$.
		\end{enumerate}
		So this completes the proof.
	\end{proof}
	\begin{re}
		A compatible pre-Lie Yamaguti algebras $(L, \star_c, \{\cdot, \cdot, \cdot\}_c)$ with $(\star_c=k_1. \star_{1, c}+ k_2. \star_{2, c}; ~~~\{\cdot, \cdot, \cdot\}_c= k_1. \{\cdot, \cdot, \cdot\}_{1, c}+ k_2. \{\cdot, \cdot, \cdot\}_{2, c})$ gives rise to a Lie Yamaguti algebras $(L, [\cdot, \cdot]_c, [\cdot, \cdot, \cdot]_c)$.
	\end{re} \begin{center}
		$\xymatrix{
			\text{Compatible pre-Lie Yamaguti algebras}&\underrightarrow{ commutator} &\text{Lie Yamaguti algebras} }$
	\end{center}
	To avoid confusion, we write $ \textbf{\emph{Rota-Baxter operator}}$ instead of $ \textbf{\emph{ Rota-Baxter operator }}$ of weight $\lambda=0$. Recall that a $ \textbf{\emph{Rota-Baxter operator}}$ on a Lie algebra $(L, [\cdot, \cdot])$ is a linear map satisfying:
	\begin{equation*}
		[R(x), R(y)]=R \Big([x, R(y)]+[R(x), y] \Big), \forall x, y \in L.
	\end{equation*}
	\newline
	So next, we introduce the notion of $ \textbf{\emph{Rota-Baxter operator}}$ on a compatible Lie Yamaguti algebras. Let's begin with the following definition
	\begin{defn}
		Let $(L, [\cdot, \cdot], [\cdot, \cdot, \cdot])$ be a Lie Yamaguti algebra. Let $R:L \rightarrow L$ be a linear map satisfying the following two identities
		\begin{align*}
			[R(x), R(y)]&=R \Big([x, R(y)]+[R(x), y] \Big) \\
			[R(x), R(y), R(z)]&=R \Big([R(x), R(y), z]+[R(x), y, R(z)]+[x, R(y), R(z)]\big).
		\end{align*} 
		Then, we say that $R$ is $ \textbf{\emph{Rota-Baxter operator}}$.
	\end{defn}
	\begin{re}
		If a Lie Yamaguti algebra $(L, [\cdot, \cdot], [\cdot, \cdot, \cdot])$ reduces to Lie triple system $(L, [\cdot, \cdot, \cdot])$, we obtain the notion of a $ \textbf{\emph{Rota-Baxter operator}}$ on a Lie triple system, i.e. the following identity holds
		\begin{equation*}
			[R(x), R(y), R(z)]=R \Big([R(x), R(y), z]+[R(x), y, R(z)]+[x, R(y), R(z)]\big), \forall x, y, z \in L.
		\end{equation*}
	\end{re}	
	\begin{defn}
		Let $(L, [\cdot, \cdot]_1, [\cdot, \cdot, \cdot]_1, [\cdot, \cdot]_2, [\cdot, \cdot, \cdot]_2)$ be a compatible Lie Yamaguti algebra. A linear map $R : L \rightarrow L$ is called a $ \textbf{\emph{Rota-Baxter operator}}$ on $(L, [\cdot, \cdot]_1, [\cdot, \cdot, \cdot]_1, [\cdot, \cdot]_2, [\cdot, \cdot, \cdot]_2)$ if $R$ is both a $ \textbf{\emph{Rota-Baxter operator}}$ on $(L, [\cdot, \cdot]_1, [\cdot, \cdot, \cdot]_1)$ and on $(L, [\cdot, \cdot]_2, [\cdot, \cdot, \cdot]_2)$.
	\end{defn}
	
	\begin{defn}
		A $ \textbf{\emph{Rota-Baxter}}$ compatible Lie Yamaguti algebra is a couple $(L, R)$ consisting of a compatible Lie Yamaguti algebra $(L, [\cdot, \cdot]_1, [\cdot, \cdot, \cdot]_1, [\cdot, \cdot]_2, [\cdot, \cdot, \cdot]_2)$ and $ \textbf{\emph{Rota-Baxter operator}}$  $R$ on it. It is denoted by Rota-Baxter compatible Lie Yamaguti algebras.
	\end{defn}Derivation $d$ on a compatible Lie Yamaguti algebra is defined as follows:
	\begin{ex}
		A linear map $d: L \rightarrow L$ is said to be a derivation of compatible Lie Yamaguti algebra $(L, [\cdot, \cdot]_1, [\cdot, \cdot, \cdot]_1, [\cdot, \cdot]_2, [\cdot, \cdot, \cdot]_2)$, if $d$ is both a derivation of $(L, [\cdot, \cdot]_1, [\cdot, \cdot, \cdot]_1)$ and a derivation of $(L, [\cdot, \cdot]_2, [\cdot, \cdot, \cdot]_2)$. \\Let $L$ be a compatible Lie Yamaguti algebra, given an invertible derivation $d$ on $L$, then $d^{-1}$ is a $ \textbf{\emph{Rota-Baxter operator}}$ on $L$, Thus, $(L, d^{-1})$ is $ \textbf{\emph{Rota-Baxter}}$ compatible Lie Yamaguti algebra.
	\end{ex}
	\begin{defn}
		Let $(L, [\cdot, \cdot]_1, [\cdot, \cdot, \cdot]_1, [\cdot, \cdot]_2, [\cdot, \cdot, \cdot]_2, R_L)$ and $(K, \{\cdot, \cdot\}_1, \{\cdot, \cdot, \cdot \}_1, \{\cdot, \cdot\}_2, \{\cdot, \cdot, \cdot \}_2, R_K)$ be two \textbf{\emph{R.B.C.L.Y.A}}. An homomorphism from $(L, R_L)$ to $(K, R_K)$ is a compatible Lie Yamaguti algebras homomorphism $\phi:L \rightarrow K$ such that 
		\begin{equation}
			\phi \circ R_L=R_K \circ \phi.
			\label{eq4.1}
		\end{equation}
		It is called an isomorphism if $\phi$ is an isomorphism.
	\end{defn}
	Now we introduce the notion of representation of Rota-Baxter compatible Lie Yamaguti algebras.
	\begin{defn}
		Let $(L, [\cdot, \cdot]_1, [\cdot, \cdot, \cdot]_1, [\cdot, \cdot]_2, [\cdot, \cdot, \cdot]_2, R)$ be a \textbf{\emph{Rota-Baxter}} compatible Lie Yamaguti algebra. A representation of it is a pair $(\mathbb{V}, T)$ in which $\mathbb{V}=(V, \rho, \mu)$ is a representation of compatible Lie Yamaguti algebra $L$ and $T:\mathbb{V} \rightarrow \mathbb{V}$ a linear map such that, for all $ x, y \in L \text{ and } u \in V$, we have
		\begin{align}
			\rho(R(x))(T(u))&=T\Big(\rho(R(x))u+\rho(x)(T(u)) \Big), \label{eq4.2} \\
			\mu(R(x), R(y))(T(u))&=T \Big(\mu(R(x), R(y))(u)+\mu(R(x), y)(T(u))+\mu(x, R(y))(T(u)) \Big).\label{eq4.3}
		\end{align}
	\end{defn}
	Here,  $\rho$ and $\mu$ are same as in the Definition \eqref{def3.3}.
	\begin{ex}
		Let $(L, [\cdot, \cdot]_1, [\cdot, \cdot, \cdot]_1, [\cdot, \cdot]_2, [\cdot, \cdot, \cdot]_2, R)$ be a \textbf{\emph{Rota-Baxter}} compatible Lie Yamaguti algebra, then $(\mathbb{L}, R)$ is a representation of it, where $\mathbb{L}=(L, R, R)$.
	\end{ex}
	\begin{ex}Let $(L, [\cdot, \cdot]_1, [\cdot, \cdot, \cdot]_1, [\cdot, \cdot]_2, [\cdot, \cdot, \cdot]_2, R)$ be a \textbf{\emph{Rota-Baxter}} compatible Lie Yamaguti\\ algebra, then $(\mathbb{V}, -T)$ is a representation of \textbf{\emph{Rota-Baxter}} compatible Lie Yamaguti algebra\\ $(L, [\cdot, \cdot]_1, [\cdot, \cdot, \cdot]_1, [\cdot, \cdot]_2, [\cdot, \cdot, \cdot]_2, -R)$.
	\end{ex}
	Next, we investigate, how to move from compatible Lie Yamaguti algebras to compatible pre-Lie Yamaguti algebras by using the Rota-Baxter operator.
	\begin{thm}
		Let $R:L \rightarrow L$ be a \textbf{Rota-Baxter} operator on a compatible Lie Yamaguti algebra $(L, [\cdot, \cdot]_1, [\cdot, \cdot, \cdot]_1, [\cdot, \cdot]_2, [\cdot, \cdot, \cdot]_2)$. Define the linear maps 
		\begin{equation}
			\star_i : \otimes^2L \rightarrow L \text{ such that } x \star_i y:=[Rx, y]_i, 
		\end{equation}
		\begin{equation}
			\{\cdot, \cdot, \cdot\}_i: \otimes^3L \rightarrow L \text{ such that } \{x, y, z\}_i:=[x, Ry, Rz]_i.
		\end{equation}
		for $x, y\in L$ and $i \in I=\{1, 2\}$. Then $(L, \star_1, \{\cdot, \cdot, \cdot\}_1, \star_2, \{\cdot, \cdot, \cdot\}_2)$ is a compatible pre-Lie Yamaguti algebras.
	\end{thm}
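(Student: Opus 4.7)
The plan is to proceed in two stages. In the first stage, I would show that each $(L, \star_i, \{\cdot,\cdot,\cdot\}_i)$ for $i=1,2$ is itself a pre-Lie Yamaguti algebra. Since $R$ is a Rota-Baxter operator on each of the two constituent Lie Yamaguti structures $(L, [\cdot,\cdot]_i, [\cdot,\cdot,\cdot]_i)$ by the definition of a Rota-Baxter operator on the compatible algebra, this is essentially the content of the theorem recalled from \cite{S1} (with the convention for $\{\cdot,\cdot,\cdot\}$ adjusted to the present placement of arguments). In particular, all five identities in the definition of a pre-Lie Yamaguti algebra hold on each side individually, and it remains only to verify the compatibility axioms (CPLY1)--(CPLY5) linking the two indices.

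For the compatibility axioms, my strategy is to expand both sides of each (CPLY$k$) using the defining formulas $x \star_i y = [Rx,y]_i$ and $\{x,y,z\}_i = [x, Ry, Rz]_i$, compute the auxiliary operations $[\cdot,\cdot]_{i,c}$, $\{\cdot,\cdot,\cdot\}_{i,D}$ and the associator $(\cdot,\cdot,\cdot)_i$ explicitly in terms of the Lie Yamaguti brackets, and then systematically apply the Rota-Baxter relations
\[
[Rx,Ry]_j = R\bigl([Rx,y]_j + [x,Ry]_j\bigr),\qquad [Rx,Ry,Rz]_j = R\bigl([Rx,Ry,z]_j+[Rx,y,Rz]_j+[x,Ry,Rz]_j\bigr)
\]
to move the outer $R$'s inside or outside as required. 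After this reduction, every remaining term is a cross-index bracket of the form $[\,\cdot\,,\,\cdot\,]_i$ paired with $[\,\cdot\,,\,\cdot\,,\,\cdot\,]_j$ or $[\,\cdot\,,\,\cdot\,]_j$ with $i \ne j$ applied to elements of $L$, and the desired vanishings follow by recognising precisely the combinations appearing in the compatibility axioms (CY1)--(CY4) of Definition~\ref{defn3.1}.

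The main obstacle will be the bookkeeping in (CPLY3) and (CPLY4). The derived operations $\{x,y,z\}_{i,D}$ and the associators $(x,y,z)_i$ each unfold into several terms, and once these are substituted into cyclic sums and then reduced via the Rota-Baxter identities above, the expressions become quite large. I expect the cleanest organisation is to group summands by (i) the number of $R$'s appearing and (ii) the cross-index pair $(i,j)$ with $i\ne j$, after which the cancellations align exactly with the cyclic/symmetric pattern of (CY2) and (CY3). The remaining identities (CPLY1), (CPLY2) and (CPLY5) are more direct: they reduce after expansion to (CY1), (CY4) and (CY4) respectively, following the same template used on each factor in the pre-Lie Yamaguti step. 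Combining both stages, and appealing to Proposition~\ref{prop3.1} to view the combined structures as linear combinations, yields that $(L, \star_1, \{\cdot,\cdot,\cdot\}_1, \star_2, \{\cdot,\cdot,\cdot\}_2)$ is a compatible pre-Lie Yamaguti algebra.
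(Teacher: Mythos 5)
Your proposal follows essentially the same route as the paper: expand the pre-Lie operations via $x\star_i y=[Rx,y]_i$ and $\{x,y,z\}_i=[x,Ry,Rz]_i$, recombine terms using the Rota-Baxter identities, and match the resulting cross-index combinations against the compatibility axioms of Definition~\ref{defn3.1}; the paper carries this out explicitly for (CPLY1), (CPLY2) and (CPLY5) and declares the remaining identities similar. The one point to watch in your first stage is that the theorem recalled from \cite{S1} uses the convention $\{x,y,z\}=[Ry,Rz,x]$ rather than $[x,Ry,Rz]$, and since the trilinear bracket is skew only in its first two slots these are genuinely different operations, so the ``adjusted convention'' deserves its own (routine) verification rather than a bare citation.
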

	\begin{proof}
		For $x, y, z, t, w \in L$, we have 
		\begin{equation}
			D(Rx, Ry)z:=\{x, y, z\}_D
		\end{equation}
		Let's begin with the first assertion $(C.P.L.Y.1)$
		\begin{align*}
			&\{z, [x, y]_{1, c}, w\}_2-\{y\star_1 z, x, w\}_2+\{x\star_1 z, y, w\}_2 \\
			&=[z, R[Rx, y]_1, Rw]_2-[z, R[Ry, x]_1, Rw]_2-[[Ry, z]_1, Rx, Rw]_2+[[Rx, z]_1, Ry, Rw]_2 \\
			&=[z, [Rx, Ry], Rw]_2-[[Ry, z]_1, Rx, Rw]_2+[[Rx, z]_1, Ry, Rw]_2 \\
			&\overset{(C.L.Y.1)}{=}0.
		\end{align*}
		For the second identity $(C.P.L.Y.2)$, we compute
		\begin{align*}
			&\{x, y, [z, t]_{1, c}\}_2-z \star_1 \{x, y, t\}_2+t \star_1 \{x, y, z\}_2 \\
			&=[x, Ry, R[Rz, t]_1]_2-[x, Ry, R[Rt, z]_1]_2-[Rz, [x, Ry, Rt]_2]_1+[Rt, [x, Ry, Rz]_2]_1 \\
			&=[x, Ry, [Rz, Rt]_1]_2-[Rz, [x, Ry, Rt]_2]_1+[Rt, [x, Ry, Rz]_2]_1 \\
			&\overset{(C.L.Y.5)}{=}0.
		\end{align*}
		For the third identity,  recall that
		\begin{align*}
			ad_i(x)(z)&=[x, z]_i \\
			\mathfrak{ad}_i(x, y)(z)&=[z, x, y]_i \\
			\mathfrak{L}_i(x, y)&=\mathfrak{ad}_i(y, x)-\mathfrak{ad}_i(x, y)-ad_i([x, y]_i)+[ad_i(x), ad_i(y)].\end{align*} Which leads us to define $\{x, y, z\}_{i, D}=\mathfrak{L}_i(Rx, Ry)(z)$, that obviously yields
		\begin{align*}
			&\{x, y, z\}_{i, D} \star_j w+\{x, y, z\}_{i} \star_jw-\{y, x, z\}_i \star_j w-\{x, y, z \star_i w\}_{j, D}+z \star_i \{x, y, w\}_{j, D} \\
			&=[R(\mathfrak{L}_i(Rx, Ry)(z), w)]_j+[R[x, Ry, Rz]_i, w]_j-[R[y, Rx, Rz]_i, w]_j-\mathfrak{L}_j(Rx, Ry)([Rz, w]_i)+[Rz, \mathfrak{L}_j(Rx, Ry)w]_i \\&=[[Rx, Ry, Rz]_i, w]_j-[R[Rx, Ry, z]_i, w]_j+[R(\mathfrak{L}_i(Rx, Ry)(z), w)]_j-\mathfrak{L}_j(Rx, Ry)([Rz, w]_i)+[Rz, \mathfrak{L}_j(Rx, Ry)w]_i\\ &=\mathfrak{ad}_j([Rx, Ry, Rz]_i)(w)-[\mathfrak{L}_j(Rx, Ry), ad_i(Rz)](w) \\ &=0 
		\end{align*}Hence, the identity $(C.P.L.Y.5)$ is confirmed by using  \eqref{def3.3} and  the Rota-Baxter operator on the Lie triple system. Whereas the rest of the identities are easy to verify.
	\end{proof}
	\noindent {\bf Acknowledgment:}
	The authors would like to thank the referee for valuable comments and suggestions on this article.
	
\end{document}